\newtheorem{theorem}{Theorem}[section]
\newtheorem{proposition}[theorem]{Proposition}
\newtheorem*{recap f}{Proposition~\ref{prop: computing f}}
\theoremstyle{definition}
\newcommand{\F}{\mathbf{F}}
\newcommand{\Fq}{\F_q}
\newcommand{\x}{\mathbf{x}}
\newcommand{\y}{\mathbf{y}}
\newcommand{\GL}{\mathrm{GL}}
\newcommand{\CC}{\mathbb{C}}
\newcommand{\ZZ}{\mathbb{Z}}
\newcommand{\Ima}{\operatorname{Im}}
\newcommand{\normchi}{\widetilde{\chi}}
\newcommand{\Cusp}{\textrm{Cusp}}
\newcommand{\Par}{\textrm{Par}}
\newcommand\Irr{\operatorname{Irr}}
\newcommand\Tr{\operatorname{Tr}}
\newcommand\hook[2]{{\left\langle #2-#1, 1^{#1} \right\rangle}}
\newcommand\basis[2]{\frac{(#1; q^{-1})_{#2}}{(q; q)_{#2}}}
\newcommand\llambda{{\underline{\lambda}}}
\newcommand\CCC{{\operatorname{Cusp}}}
\newcommand\1{{\bf 1}}
\newcommand\defn[1]{{\bf #1}}
\newcommand{\C}{\mathcal{C}}
\newcommand{\wt}{\operatorname{wt}}
\newcommand{\cycles}{\operatorname{cycles}}
\theoremstyle{remark}
\newtheorem{remark}[theorem]{Remark}
\newtheorem{question}[theorem]{Question}
\numberwithin{equation}{section}
\newcommand\qbin[3]{\left[\begin{matrix} #1 \\ #2 \end{matrix} \right]_{#3}}
\def\SS{\mathfrak{S}}
\begin{document}

\title{$\GL_n(\F_q)$-analogues of factorization problems in the symmetric group}
\author{Joel Brewster Lewis}
\address{J.~B. Lewis \\ University of Minnesota, Twin Cities}
\email{jblewis@math.umn.edu}
\author{Alejandro H. Morales}
\address{A.~H. Morales \\ University of California, Los Angeles}
\email{ahmorales@math.ucla.edu}
\date{\today}

\begin{abstract}
We consider $\GL_n(\F_q)$-analogues of certain factorization
problems in the symmetric group $\SS_n$: rather than counting factorizations of
the long cycle $(1,2,\ldots,n)$ given the
number of cycles of each factor, we count factorizations of a 
regular elliptic element given the fixed space dimension of each 
factor. We show that, as in $\SS_n$, the generating function counting these
factorizations has attractive coefficients after
an appropriate change of basis.  Our work generalizes several recent
results on factorizations in $\GL_n(\F_q)$ and also uses a
character-based approach.

As an application of our results, we compute the asymptotic growth rate of the number of factorizations of fixed genus of a regular elliptic element in $\GL_n(\F_q)$ into two factors as $n \to \infty$.  We end with a number of open questions.
\end{abstract}

\maketitle

\tableofcontents

\newpage

\section{Introduction}

There is a rich vein in combinatorics of problems related to
factorizations in the symmetric group~$\SS_n$.  Frequently, the size of
a certain family of factorizations is unwieldy but has an
attractive generating function, possibly after an appropriate change
of basis. As a prototypical example, one might seek to count
factorizations $c = u \cdot v$ of the \defn{long cycle} $c = (1, 2,
\ldots, n)$ in $\SS_n$ as a product of two permutations, keeping track
of the \emph{number of cycles} or even the \emph{cycle types} of the
two factors.  Such results have been given by Harer--Zagier \cite[\S
5]{HarerZagier} when one of the factors is a fixed point-free involution, and in
the general setting by Jackson \cite[\S4]{Jackson1},
\cite{Jackson2}.

\begin{theorem}[Jackson \cite{Jackson2}; Morales--Vassilieva \cite{MV}]
\label{q=1 two factors}

Let $a_{r, s}$ be the number of pairs $(u, v)$ of elements of $\SS_n$
such that $u$ has $r$ cycles, $v$ has $s$ cycles, and $c = u \cdot
v$. Then
\begin{equation} \label{eq:Sn-twofactors-cycles}
\frac{1}{n!} \sum_{r, s \geq 0} a_{r, s} \cdot x^r y^s
=
\sum_{t, u \geq 1} 
          \binom{n - 1}{t - 1; u - 1; n - t - u + 1} \binom{x}{t} \binom{y}{u}.
\end{equation}
Moreover, for $\lambda, \mu$ partitions of $n$, let $a_{\lambda, \mu}$ be the number of pairs $(u, v)$ of elements of $\SS_n$ such that $u$ has cycle type $\lambda$, $v$ has cycle type $\mu$, and $c = u \cdot v$.  Then
\begin{equation}
\label{q=1 refined generating function}
\frac{1}{n!}\sum_{\lambda, \mu \vdash n} a_{\lambda, \mu} \cdot p_\lambda(\x) p_\mu(\y)
= 
\sum_{\alpha,\beta}\frac{(n-\ell(\alpha))!(n-\ell(\beta))!}{(n - 1)! (n+1-\ell(\alpha)-\ell(\beta))!}
{\x}^{\alpha} {\y}^{\beta},
\end{equation}
where $p_\lambda$ denotes the usual power-sum symmetric function and the sum on the right is over all weak compositions $\alpha, \beta$ of $n$.
\end{theorem}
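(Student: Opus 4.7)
The natural approach is character-theoretic, based on Frobenius's formula for factorization counts in a finite group combined with the vanishing of the irreducible character values of $\SS_n$ on the long cycle off the hook partitions. First I would write
\[
a_{\lambda, \mu} = \frac{|C_\lambda| \cdot |C_\mu|}{n!} \sum_{\nu \vdash n} \frac{\chi^\nu(\lambda) \chi^\nu(\mu) \chi^\nu(c)}{\chi^\nu(1)},
\]
where $C_\lambda$ is the conjugacy class of cycle type $\lambda$ and $\chi^\nu$ is the irreducible character of $\SS_n$ indexed by $\nu$. By the Murnaghan--Nakayama rule, $\chi^\nu(c) = 0$ unless $\nu = (n-k, 1^k)$ is a hook partition, in which case $\chi^\nu(c) = (-1)^k$ and $\chi^\nu(1) = \binom{n-1}{k}$, so the sum collapses from $p(n)$ to $n$ terms.

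Next I multiply by $p_\lambda(\x) p_\mu(\y)$, sum over $\lambda, \mu \vdash n$, and apply the Frobenius identity $\sum_\lambda \chi^\nu(\lambda) p_\lambda(\x)/z_\lambda = s_\nu(\x)$ (using $|C_\lambda|/n! = 1/z_\lambda$) to obtain
\[
\frac{1}{n!}\sum_{\lambda, \mu \vdash n} a_{\lambda,\mu} \, p_\lambda(\x) p_\mu(\y)
= \sum_{k=0}^{n-1} \frac{(-1)^k}{\binom{n-1}{k}} \, s_{(n-k,1^k)}(\x) \, s_{(n-k,1^k)}(\y).
\]
The substantive combinatorial step is to expand each side in the monomial basis and match with (\ref{q=1 refined generating function}). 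Using Jacobi--Trudi, $s_{(n-k,1^k)} = h_{n-k} e_k - h_{n-k+1} e_{k-1}$, to expand the hook Schur functions, and then extracting $[\x^\alpha \y^\beta]$, one is left with a sum over $k$ that collapses by a standard binomial identity to the prescribed coefficient $(n-\ell(\alpha))!(n-\ell(\beta))!/[(n-1)!(n+1-\ell(\alpha)-\ell(\beta))!]$.

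The first identity (\ref{eq:Sn-twofactors-cycles}) then follows from the second by the principal specialization $\x \mapsto (1^x)$, $\y \mapsto (1^y)$: the LHS becomes $\sum_{r,s} a_{r,s} x^r y^s$ since $p_\lambda \mapsto x^{\ell(\lambda)}$, while on the RHS one uses that $\sum_{\alpha : \ell(\alpha) = t} \x^\alpha \big|_{\x = (1^x)} = \binom{x}{t}\binom{n-1}{t-1}$ (polynomially in $x$), and the factorial prefactor combines with the two binomial factors to produce exactly $\binom{n-1}{t-1;\,u-1;\,n-t-u+1}$. The main obstacle is the combinatorial identity in the hook-Schur step; it is the core content of Morales--Vassilieva's contribution, and an alternative route is to prove it bijectively, matching pairs $(u,v)$ with $uv = c$ to decorated combinatorial objects whose weight generating function is visibly the right-hand side.
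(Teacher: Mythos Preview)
Your approach is correct and shares the character-theoretic skeleton of the paper's sketch in Section~\ref{character approach in S_n}: Frobenius's counting formula together with the Murnaghan--Nakayama vanishing on the long cycle reduces everything to hook partitions. The routes diverge after that. The paper works directly with the coarse generating function, setting $f_\lambda(x) = \sum_r \normchi^\lambda(z_r)\,x^r$ and using the closed form $f_{\hook{d}{n}}(x) = (x-d)(x-d+1)\cdots(x-d+n-1)$, then expanding in the basis $\binom{x}{k}$ and extracting coefficients; it does not sketch a proof of the refined identity~\eqref{q=1 refined generating function} at all, simply attributing it to Morales--Vassilieva. You instead prove the refined version first, passing via the Frobenius characteristic map to hook Schur functions, and then obtain~\eqref{eq:Sn-twofactors-cycles} by principal specialization; this establishes both halves of the theorem at once.

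Two minor comments. First, the monomial coefficient of a hook Schur function is easier to read off directly as a Kostka number, $[\x^\alpha]\,s_{(n-k,1^k)} = K_{(n-k,1^k),\alpha} = \binom{\ell(\alpha)-1}{k}$, than via the Jacobi--Trudi detour you outline. Second, with that in hand the remaining sum is $\sum_k (-1)^k\binom{a-1}{k}\binom{b-1}{k}\big/\binom{n-1}{k}$ with $a=\ell(\alpha)$, $b=\ell(\beta)$; this is exactly a terminating ${}_2F_1(1-a,1-b;1-n;1)$ and is summed by Chu--Vandermonde, so it is worth naming rather than calling ``standard.'' The paper's $f_V(x)$ approach, on the other hand, is the one that survives the passage to $\GL_n(\Fq)$, where no Schur-function machinery is available but the analogous polynomials can still be computed (Proposition~\ref{prop: computing f}).
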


Recently, there has been interest in $q$-analogues of such problems,
replacing $\SS_n$ with the finite general linear group $\GL_n(\Fq)$,
the long cycle with a \emph{Singer cycle} (or, more generally,
\emph{regular elliptic element}) $c$, and the number of cycles with
the \emph{fixed space dimension} \cite{LRS, HLR}; or in more general
geometric settings \cite{HaLeR-V}.  In the present paper, we extend this approach to give the following $q$-analogue of Theorem~\ref{q=1 two factors}.  Our theorem statement uses the standard notations
\[
(a; q)_m  = (1 - a)(1 - aq) \cdots (1 - aq^{m - 1}) 
\]
and
\[
[m]!_q = \frac{(q; q)_m}{(1 - q)^m} = 1\cdot (1 + q) \cdots (1 + q + \ldots + q^{m - 1}).
\]
\begin{theorem}
\label{thm: two factors}
Fix a regular elliptic element $c$ in $G = \GL_n(\Fq)$.  Let $a_{r, s}(q)$ be the number of pairs $(u, v)$ of elements of $G$ such that $u$ has fixed space dimension $r$, $v$ has fixed space dimension~$s$, 
and $c = u \cdot v$.  Then
\begin{multline}
\label{eq: two factor theorem}
\frac{1}{|G|} \sum_{r, s \geq 0} a_{r, s}(q) \cdot x^r y^s
 = 
\frac{(x;q^{-1})_n}{(q;q)_n} + \frac{(y;q^{-1})_n}{(q;q)_n} 
+ {} \\
\sum_{\substack{0\leq t,u \leq n-1\\ t+u\leq n}}
q^{tu-t-u}
\frac{[n-t-1]!_q \cdot [n-u-1]!_q}{[n - 1]!_q \cdot [n-t-u]!_q} \frac{(q^n -q^{t}-q^{u}+1)}{(q-1)} 
\cdot
\frac{(x;q^{-1})_{t}}{(q;q)_{t}} \frac{(y;q^{-1})_{u}}{(q;q)_{u}}.
\end{multline}
\end{theorem}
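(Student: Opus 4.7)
The plan is to use the classical character-theoretic (Frobenius) approach. For any two conjugacy classes $C, C'$ of $G = \GL_n(\Fq)$, the number of pairs $(u,v) \in C \times C'$ with $uv = c$ is
\[
\frac{|C|\,|C'|}{|G|}\sum_{\chi \in \Irr(G)} \frac{\chi(C)\,\chi(C')\,\chi(c^{-1})}{\chi(1)}.
\]
Summing over all classes with fixed-space dimensions $r$ and $s$ gives
\[
a_{r,s}(q) = \frac{1}{|G|}\sum_{\chi \in \Irr(G)} \frac{\chi(c^{-1})}{\chi(1)}\, f_r(\chi)\, f_s(\chi), \qquad f_r(\chi) := \sum_{\substack{g \in G \\ \dim\ker(g - 1) = r}} \chi(g).
\]
So producing the generating function amounts to understanding the two ingredients $\chi(c^{-1})$ and $f_r(\chi)$, and then doing a clean change of basis in the variable $x^r$ (and symmetrically in $y^s$).

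The first ingredient is controlled by the fact that $c$ is regular elliptic: by the standard Deligne--Lusztig / Green theory for $\GL_n(\Fq)$, $\chi(c^{-1})$ vanishes on every irreducible except the $n$ unipotent hook characters $\chi_k$ attached to the partitions $\hook{k}{n-k}$ for $0 \le k \le n-1$, on which it takes the value $\pm 1$. This collapses the sum over $\Irr(G)$ to a sum of $n$ terms, and moreover the dependence on $k$ enters only through $\chi_k(1)$, which is a well-known $q$-analogue. The second ingredient, $f_r(\chi_k)$, is the content of the previously established Proposition~\ref{prop: computing f}: it gives $f_r(\chi_k)$ essentially as a product involving $(q;q)$-Pochhammer symbols, and crucially in a shape that meshes with the target basis $\frac{(x; q^{-1})_t}{(q;q)_t}$. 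Substituting both ingredients into the display above yields a finite sum
\[
\frac{1}{|G|}\sum_{r,s} a_{r,s}(q)\, x^r y^s
= \sum_{k=0}^{n-1} (\pm 1)\,\frac{1}{\chi_k(1)}\, \Phi_k(x)\, \Phi_k(y),
\]
where $\Phi_k(x) = \sum_r f_r(\chi_k)\, x^r$.

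The remaining work, and the main technical obstacle, is to re-expand each $\Phi_k(x)$ in the basis $\left\{\frac{(x;q^{-1})_t}{(q;q)_t}\right\}_{0 \le t \le n}$ and then evaluate the sum over $k$ of the resulting bilinear form in the $(x,t)$- and $(y,u)$-coefficients. The prediction from the statement of the theorem is that this sum collapses: only the diagonal pairs $(t,n)$ and $(n,u)$ survive as isolated terms (producing the two unmixed summands $(x;q^{-1})_n/(q;q)_n$ and $(y;q^{-1})_n/(q;q)_n$), and the remaining contributions combine into the explicit coefficient
\[
q^{tu - t - u}\,\frac{[n-t-1]!_q\,[n-u-1]!_q}{[n-1]!_q\,[n-t-u]!_q}\cdot \frac{q^n - q^t - q^u + 1}{q - 1}
\]
on $\frac{(x;q^{-1})_t}{(q;q)_t} \frac{(y;q^{-1})_u}{(q;q)_u}$ for $t,u \le n-1$ and $t+u \le n$. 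Showing this requires a $q$-hypergeometric identity summing over the $n$ hook indices $k$; I would attack it by recognizing the sum as a terminating balanced $_3\phi_2$ and invoking the $q$-Pfaff--Saalschütz or terminating $q$-Vandermonde identity, with the identity $q^n - q^t - q^u + 1 = (1 - q^t)(1 - q^u) + (q^n - 1) - (q^{t+u} - 1) + \cdots$ (or a similar two-term split) used to separate the terms that survive from the terms that telescope into the boundary summands.
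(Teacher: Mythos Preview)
Your overall architecture is correct and matches the paper's approach: Frobenius, then Proposition~\ref{prop: computing f} to evaluate the per-character generating functions $\Phi_k(x)$, then a $q$-hypergeometric identity to collapse the sum over hooks. But there is one genuine error and one place where your plan is vaguer than it should be.

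\textbf{The error.} Your assertion that ``$\chi(c^{-1})$ vanishes on every irreducible except the $n$ unipotent hook characters'' is false. Proposition~\ref{Singer-cycle-character-values}(i) says $\chi^{\llambda}(c)$ vanishes unless $\llambda$ is primary of hook shape over \emph{some} cuspidal $U$ of weight dividing $n$, not only over $U = \1$. These extra characters genuinely contribute, and if you drop them you get the wrong answer: for instance, the coefficient of $\frac{(x;q^{-1})_n}{(q;q)_n}\frac{(y;q^{-1})_n}{(q;q)_n}$ coming from the unipotent hooks alone is $(q;q)_{n-1}$, not zero. The paper does not compute $\chi^{U,\lambda}(c^{-1})$ for $U \neq \1$ either; instead it uses the regular-representation trick. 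Since Proposition~\ref{prop: computing f} shows that $f_{U,\lambda}(x) = |G|\cdot \frac{(x;q^{-1})_n}{(q;q)_n}$ is \emph{the same} for every such $(U,\lambda)$, their total contribution is $|G|^2 \frac{(x;q^{-1})_n}{(q;q)_n}\frac{(y;q^{-1})_n}{(q;q)_n} \sum_{V \neq (\1,\mathrm{hook})} \deg(V)\chi^V(c^{-1})$, and one evaluates that sum by subtracting the hook terms from $\sum_{V} \deg(V)\chi^V(c^{-1}) = 0$. This produces exactly the $-(q;q)_{n-1}$ correction in \eqref{two factors simplified} that cancels the spurious $(n,n)$ coefficient and leaves the isolated $(t,n)$ and $(n,u)$ terms you were expecting.

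\textbf{The hypergeometric step.} Your instinct is right that the sum over the hook index is a basic hypergeometric series, but the identification is slightly off. The paper recognizes the coefficient $b_{t,u}(q)$ (for $t+u<n$) as a ${}_2\phi_1(q^{-t},q^{-u};q^{1-n};q^{t+u-n})$; this is \emph{not} directly summable by $q$-Chu--Vandermonde (the argument is off by one power of $q$). Instead one applies the transformation \cite[(III.7)]{GasperRahman} to rewrite it as a ${}_3\phi_2$ with a numerator parameter $q^{-1}$, which then truncates after two terms. Your proposed Pfaff--Saalsch\"utz route may also work, but you should be aware that a naive Vandermonde will not.
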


More generally, in either $\SS_n$ or $\GL_n(\F_q)$ one may consider factorizations into more than two factors.  In $\SS_n$, this gives the following result. 
\begin{theorem}[Jackson \cite{Jackson2}; Bernardi--Morales \cite{BernardiM}]
\label{q=1 many factors}
Let $a_{r_1,r_2,\ldots,r_k}$ be the number of $k$-tuples
$(u_1,u_2,\ldots,u_k)$ of permutations in $\SS_n$ such that
$u_i$ has $r_i$ cycles and $u_1u_2\cdots u_k =c$. Then 
\begin{equation}
\label{eq:Jackson's formula into many factors}
\frac{1}{(n!)^{k - 1}}\sum_{1 \leq r_1,r_2,\ldots,r_k \leq n} a_{r_1,\ldots,r_k} \cdot x_1^{r_1}\cdots x_k^{r_k} = 
\sum_{1\leq p_1,\ldots,p_k \leq n}
M^{n-1}_{p_1-1,\ldots,p_k-1} \binom{x_1}{p_1}\cdots \binom{x_k}{p_k},
\end{equation}
where 
\begin{align} 
\label{eq:defM}
M^{m}_{r_1,\ldots,r_k} &:= 
% [t_1^{p_1}\cdots t_k^{p_k}]
% \left(\prod_{i=1}^k (1+t_i) - \prod_{i=1}^k t_i\right)^m\\
% &= 
\sum_{d=0}^{\min(r_i)} (-1)^d \binom{m}{d} \prod_{i=1}^k \binom{m-d}{r_i-d}.
\end{align}
Moreover, let $a_{\lambda^{(1)},\ldots,\lambda^{(k)}}$ be the number of $k$-tuples
$(u_1,u_2,\ldots,u_k)$ of permutations in $\SS_n$ such that
$u_i$ has cycle type $\lambda^{(i)}$ and $u_1u_2\cdots u_k =c$. Then
\[
\frac{1}{(n!)^{k - 1}}
\sum_{\lambda^{(1)},\cdots,\lambda^{(k)} \vdash n} 
\hspace{-.25in}
a_{\lambda^{(1)},\ldots,\lambda^{(k)}} \cdot
p_{\lambda^{(1)}}({\x}_1)\cdots p_{\lambda^{(k)}}(\x_k) 
= \hspace{-.1in}
\sum_{\alpha^{(1)},\ldots,\alpha^{(k)}}
\hspace{-.1in}
\frac{M^{n-1}_{\ell(\alpha^{(1)})-1,\ldots,\ell(\alpha^{(k)})-1}}{\prod_{i=1}^k
 \binom{n-1}{\ell(\alpha^{(i)})-1}}({\x}_1)^{\alpha^{(1)}} \cdots ({\x}_k)^{\alpha^{(k)}}, 
\]
where the sum on the right is over all weak compositions $\alpha^{(1)},\ldots,\alpha^{(k)}$ of $n$.
\end{theorem}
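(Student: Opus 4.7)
The approach is character-theoretic, via the Frobenius formula for counting factorizations in a finite group. For any $G$ and conjugacy classes $C_1, \ldots, C_k$, the number of $k$-tuples $(u_1, \ldots, u_k)$ with $u_j \in C_j$ and $u_1 \cdots u_k = c$ equals
\[
\frac{\prod_j |C_j|}{|G|} \sum_{\chi \in \Irr(G)} \frac{\prod_j \chi(u_j) \cdot \overline{\chi(c)}}{\chi(1)^{k-1}}.
\]
Applied to $G = \SS_n$ at the long cycle, $|C_{\lambda^{(j)}}| = n!/z_{\lambda^{(j)}}$, and the Murnaghan--Nakayama rule forces $\chi^\mu(c) = 0$ unless $\mu = (n-i, 1^i)$ is a hook; in that case $\chi^\mu(c) = (-1)^i$ and $\chi^\mu(1) = \binom{n-1}{i}$. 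So the character sum has just $n$ terms, yielding a closed formula for $a_{\lambda^{(1)}, \ldots, \lambda^{(k)}}$ supported on hook characters.

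Next I would multiply this formula by $\prod_j p_{\lambda^{(j)}}(\x_j) / (n!)^{k-1}$ and sum over all tuples of cycle types. The classical identity $s_\mu = \sum_{\lambda \vdash n} z_\lambda^{-1} \chi^\mu(\lambda) p_\lambda$ collapses each inner sum into a Schur function, giving
\[
\frac{1}{(n!)^{k-1}} \sum_{\lambda^{(1)}, \ldots, \lambda^{(k)} \vdash n} a_{\lambda^{(1)}, \ldots, \lambda^{(k)}} \prod_j p_{\lambda^{(j)}}(\x_j) = \sum_{i=0}^{n-1} \frac{(-1)^i}{\binom{n-1}{i}^{k-1}} \prod_j s_{(n-i, 1^i)}(\x_j).
\]
A direct count of semistandard tableaux of hook shape produces the monomial expansion $s_{(n-i, 1^i)}(\x) = \sum_\alpha \binom{\ell(\alpha)-1}{i} \x^\alpha$, summed over weak compositions $\alpha$ of $n$ with $\ell(\alpha)$ nonzero parts: one picks which $i$ of the $\ell-1$ ``larger'' values appear in the column strip, the remaining entries being uniquely arranged in the row. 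Extracting the coefficient of $\prod_j (\x_j)^{\alpha^{(j)}}$ and writing $r_j := \ell(\alpha^{(j)}) - 1$, the refined identity reduces to
\[
\sum_i \frac{(-1)^i}{\binom{n-1}{i}^{k-1}} \prod_j \binom{r_j}{i} = \frac{M^{n-1}_{r_1, \ldots, r_k}}{\prod_j \binom{n-1}{r_j}},
\]
which I verify by applying the trinomial revision $\binom{n-1}{d} \binom{n-1-d}{r-d} = \binom{n-1}{r} \binom{r}{d}$ to each factor in the definition~\eqref{eq:defM} of $M$.

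To deduce the first formula \eqref{eq:Jackson's formula into many factors}, I specialize $\x_j = (1, 1, \ldots, 1, 0, 0, \ldots)$ with $N_j$ ones. Then $p_{\lambda^{(j)}}(\x_j) = N_j^{\ell(\lambda^{(j)})}$, so the LHS of the refined identity becomes $(n!)^{1-k} \sum a_{r_1, \ldots, r_k} \prod N_j^{r_j}$; on the RHS, the number of weak compositions of $n$ supported in $[N_j]$ with exactly $\ell_j$ nonzero parts is $\binom{N_j}{\ell_j} \binom{n-1}{\ell_j - 1}$, which exactly cancels the factor $\binom{n-1}{\ell_j - 1}$ in the denominator to produce $\sum_{p_1, \ldots, p_k} M^{n-1}_{p_1 - 1, \ldots, p_k - 1} \prod_j \binom{N_j}{p_j}$. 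Both sides are polynomials of degree at most $n$ in each $x_j$, so agreement at all nonnegative integers $N_j$ implies the polynomial identity. The main obstacle is the binomial manipulation identifying the character-theoretic sum with $M^{n-1}_{r_1, \ldots, r_k}$, as the Frobenius reduction and hook-Schur expansion are classical tools.
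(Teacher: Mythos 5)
Your proof is correct. Worth noting: the paper does not actually prove Theorem~\ref{q=1 many factors}; it is cited to Jackson and Bernardi--Morales, and what the paper provides (in \S2.2) is a proof \emph{sketch} of the $k=2$ coarse formula~\eqref{eq:Sn-twofactors-cycles} only, as a template for the $\GL_n(\Fq)$ arguments. Your argument is the natural extension of that template to $k$ factors and, moreover, establishes the refined (cycle-type) version as well. The two routes differ as follows. The paper's sketch stays entirely inside the ``number of cycles'' generating function, computing $f_V(x) = \sum_r \normchi^V(z_r)x^r$ for hook characters and expanding in the binomial basis $\binom{x}{k}$. You instead pass through symmetric functions: collapsing $\sum_\lambda z_\lambda^{-1}\chi^\mu(\lambda)p_\lambda(\x)$ to the hook Schur function $s_{(n-i,1^i)}(\x)$, using the monomial expansion $s_{(n-i,1^i)}(\x) = \sum_\alpha \binom{\ell(\alpha)-1}{i}\x^\alpha$ (which is exactly the SSYT count you describe), and then reconciling the resulting sum $\sum_i (-1)^i\binom{n-1}{i}^{1-k}\prod_j\binom{r_j}{i}$ with $M^{n-1}_{r_1,\dots,r_k}/\prod_j\binom{n-1}{r_j}$ via the trinomial revision $\binom{n-1}{d}\binom{n-1-d}{r-d}=\binom{n-1}{r}\binom{r}{d}$. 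This route yields the refined identity first and gives the coarse one by the principal specialization $\x_j=(1^{N_j},0,\dots)$ together with the count $\binom{N_j}{p_j}\binom{n-1}{p_j-1}$ of weak compositions of $n$ supported in $[N_j]$ with $p_j$ nonzero parts; the paper's sketch, by contrast, never touches the refined version. Both arguments use only the same two character-theoretic inputs (Frobenius and Murnaghan--Nakayama on hooks), so they are the same in spirit, but your Schur-function detour is cleaner for getting both halves of the theorem simultaneously.
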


In the present paper, we prove the following $q$-analogue of this
result.  The statement uses the standard $q$-binomial coefficient ${
  \footnotesize \qbin{n}{k}{q}} = [n]!_q {\big /} \left([k]!_q \cdot [n - k]!_q\right)$.
\begin{theorem}
\label{thm: many factors}
Fix a regular elliptic element $c$ in $G = \GL_n(\Fq)$.  Let $a_{r_1, \ldots, r_k}(q)$ be the number of tuples $(u_1, \ldots, u_k)$ of elements of $G$ such that $u_i$ has fixed space dimension $r_i$ and $u_1 \cdots u_k = c$.  Then 
\begin{equation} \label{eq:genseries-manyfactors} 
\frac{1}{|G|^{k - 1}}
\sum_{r_1, \ldots, r_k} a_{r_1, \ldots, r_k}(q) \cdot x_1^{r_1} \cdots x_k^{r_k}
=
\sum_{ \substack{{\bf p} = (p_1, \ldots, p_k) \colon \\ 0 \leq p_i \leq n}  } 
\frac{M^{n - 1}_{\widetilde{\bf p}}(q)}{\prod_{p \in \widetilde{\bf p}} \qbin{n-1}{p}{q}} \cdot 
\frac{(x_{1};q^{-1})_{p_1}}{(q;q)_{p_1}} \cdots \frac{(x_{k};q^{-1})_{p_k}}{(q;q)_{p_k}},
\end{equation}
where $\widetilde{\bf p}$ is the result of deleting all copies of $n$ from $\bf p$,
\begin{equation} \label{eq:def-qM}
M^{m}_{r_1,\ldots,r_k}(q) := \sum_{d=0}^{\min(r_i)} (-1)^d
q^{\binom{d+1}{2}-kd} \qbin{m}{d}{q} \prod_{i=1}^k \qbin{m-d}{r_i-d}{q}
\end{equation}
for $k > 0$, and $M^{m}_{\varnothing}(q) := 0$.
\end{theorem}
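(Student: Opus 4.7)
The strategy is character-theoretic. The main tool is the standard Frobenius-type identity: for any class functions $f_1, \ldots, f_k$ on $G = \GL_n(\Fq)$ and any $c \in G$,
\[
\sum_{u_1 \cdots u_k = c} \prod_{i=1}^k f_i(u_i) = \sum_{\chi \in \Irr(G)} \frac{|G|^{k-1}\,\chi(c)}{\chi(1)^{k-1}} \prod_{i=1}^k \langle f_i, \overline{\chi}\rangle.
\]
Applying this with $f_i(g) = x_i^{\operatorname{fix}(g)}$ (where $\operatorname{fix}(g)$ denotes the fixed-space dimension) produces exactly the generating function on the left-hand side of \eqref{eq:genseries-manyfactors}, so the problem reduces to a character computation.

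The key simplification comes from $c$ being regular elliptic: as established in \cite{LRS,HLR}, $\chi(c) = 0$ for all irreducible characters $\chi$ of $G$ except the $n$ unipotent ``hook'' characters $\chi_0, \chi_1, \ldots, \chi_{n-1}$, with known explicit formulas for $\chi_j(c)$ (a signed unit) and $\chi_j(1)$ (a $q$-analogue of $\binom{n-1}{j}$). This collapses the character sum to a finite sum indexed by $j \in \{0,1,\ldots,n-1\}$.

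The heart of the proof is then the computation of the one-variable polynomials
\[
F_j(x) \;:=\; \frac{1}{|G|}\sum_{g \in G} x^{\operatorname{fix}(g)}\,\overline{\chi_j(g)}
\]
and their expansion in the basis $\{(x;q^{-1})_p/(q;q)_p\}_{p=0}^{n}$. Only the identity has $\operatorname{fix}(g) = n$, so only $F_0(x)$ contributes to the top basis element; this is the origin of the ``delete copies of $n$'' convention for $\widetilde{\bf p}$, and matches $F_0(x) = (x;q^{-1})_n/(q;q)_n$ (consistent with $|G| = (-1)^n q^{\binom{n}{2}}(q;q)_n$). For $j \geq 1$ one expects $F_j(x) = \sum_{p \geq j} c_{j,p}(q)\,(x;q^{-1})_p/(q;q)_p$ with $c_{j,p}(q)$ explicit in terms of $q$-binomial coefficients and small power-of-$q$ factors. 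Substituting $\prod_i F_j(x_i)$ into the Frobenius identity and extracting the coefficient of $\prod_i (x_i;q^{-1})_{p_i}/(q;q)_{p_i}$ yields an alternating sum over $j$ that, after collecting powers of $q$, reproduces $M^{n-1}_{\widetilde{\bf p}}(q) \big/ \prod_{p\in\widetilde{\bf p}}\qbin{n-1}{p}{q}$ as in \eqref{eq:def-qM}.

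The main obstacle is the explicit determination of each $F_j(x)$: it requires hook-character values on all conjugacy classes of $\GL_n(\Fq)$ organized by fixed-space dimension, which is considerably more intricate than the symmetric-group analogue, where the corresponding inner product is elementary. I would handle this by recycling the one- and two-factor calculations already developed in \cite{LRS,HLR} and used for Theorem~\ref{thm: two factors}; the multiplicative structure of the Frobenius identity means that passing from $k = 2$ to general $k$ alters only the exponent of $\chi_j(1)$ and injects no new character data, so the remaining task---recognizing the resulting alternating $q$-sum indexed by $j$ as the polynomial $M^{n-1}_{\widetilde{\bf p}}(q)$ via a $q$-inclusion--exclusion---parallels the Bernardi--Morales proof of Theorem~\ref{q=1 many factors}, with care needed to track the $q^{\binom{d+1}{2}-kd}$ prefactor appearing in \eqref{eq:def-qM}.
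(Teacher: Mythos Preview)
Your overall strategy---Frobenius, then reduce to a short list of characters using the vanishing of $\chi(c)$, then expand each one-variable polynomial in the basis $\{(x;q^{-1})_p/(q;q)_p\}$---is exactly the paper's approach, and you are right that passing from $k=2$ to general $k$ requires no new character data.  However, two of your concrete assertions are wrong, and if you followed them literally the computation would not close.

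First, it is \emph{not} true that $\chi(c)=0$ for every irreducible except the $n$ unipotent hooks.  Proposition~\ref{Singer-cycle-character-values}(i) only says that the surviving characters are primary hooks $\chi^{U,\lambda}$ over \emph{some} cuspidal $U$ of weight $s\mid n$; the cuspidal $U$ need not be $\1$.  The paper never evaluates $\chi^{U,\lambda}(c)$ for $U\neq\1$ individually.  Instead it uses that, by Proposition~\ref{character values prop}(i), the polynomial $f_{U,\lambda}(x)$ is the \emph{same} for every $U\neq\1$ (namely $|G|\,(x;q^{-1})_n/(q;q)_n$), so the total contribution of all such characters can be computed from the regular-representation identity $\sum_{V}\deg(V)\chi^V(c^{-1})=0$.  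This ``regular representation trick'' produces the extra term $-(q;q)_{n-1}|G|^k\prod_i (x_i;q^{-1})_n/(q;q)_n$ in~\eqref{many factors simplified}; without it the all-$p_i=n$ coefficient would be $(q;q)_{n-1}$ rather than $0$.

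Second, your explanation of the ``delete copies of $n$'' convention is incorrect.  It is not the case that only $F_0$ contributes to the top basis element: by~\eqref{expressed in new basis}, \emph{every} hook polynomial $f_{\1,\hook{d}{n}}(x)/|G|$ has coefficient exactly $1$ on $(x;q^{-1})_n/(q;q)_n$.  (In particular your formula $F_0(x)=(x;q^{-1})_n/(q;q)_n$ is wrong; for the trivial character one gets $\sum_{m=0}^{n}(x;q^{-1})_m/(q;q)_m$.)  The reason $p_i=n$ factors drop out is precisely that this top coefficient is $1$ independently of $d$, so setting some $p_i=n$ simply removes that factor from~\eqref{interesting coefficient many factors}; and the boundary case $\widetilde{\bf p}=\varnothing$ vanishes because the hook sum $\sum_d(-1)^dq^{\binom{d+1}{2}}\qbin{n-1}{d}{q}=(q;q)_{n-1}$ is cancelled by the regular-representation term above.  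Once you correct these two points, the remaining coefficient extraction is exactly the short computation in~\eqref{interesting coefficient many factors}, with no further $q$-identity needed.
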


\begin{remark}
In viewing Theorems~\ref{thm: two factors} and~\ref{thm: many factors} as 
$q$-analogues of Theorems~\ref{q=1 two factors} and~\ref{q=1 many factors}, 
it is helpful to first observe that if $x = q^N$ is a
positive integer power of $q$ then 
$\dfrac{(x; q^{-1})_m}{(q; q)_m} = {\footnotesize \qbin{N}{m}{q}}$. 
Further, we have the equality
\[
\lim_{q\to 1} \frac{[n-t-1]!_q \cdot [n-u-1]!_q}{[n - 1]!_q \cdot [n-t-u]!_q} \frac{(q^n -q^{t}-q^{u}+1)}{(q-1)} = \frac{(n-(t+1))!(n-(u+1))!}{(n - 1)! (n+1-(t+1)-(u+1))!}
\]
between the limit of a coefficient 
in~\eqref{eq: two factor theorem} and a coefficient on 
the right side of~\eqref{q=1 refined generating function}, 
and more generally the equality $\lim_{q \to 1}M^m_{r_1, \ldots, r_k}(q) = M^m_{r_1, \ldots, r_k}$.  

Note that the generating function \eqref{eq:genseries-manyfactors} is (in its definition) analogous to the less-refined generating function \eqref{eq:Jackson's formula into many factors}, while the coefficient 
\[
M^{n - 1}_{\widetilde{\bf p}}(q) \Big/ \prod_{p \in \widetilde{\bf p}}\qbin{n - 1}{p}{q}
\]
is analogous (in the $q \to 1$ sense) to a coefficient in the more
refined half of Theorem~\ref{q=1 many factors}.  This phenomenon is
mysterious.  A similar phenomenon was observed in the discussion following Theorem~4.2 in \cite{HLR}, namely, that the counting formula $q^{e(\alpha)}(q^n - 1)^{k - 1}$ for factorizations of a regular elliptic element in $\GL_n(\F_q)$ into $k$ factors with fixed space codimensions given by the composition $\alpha$ of $n$ is a $q$-analogue of the counting formula $n^{k - 1}$ for factorizations of an $n$-cycle as a genus-$0$ product of $k$ \emph{cycles} of specified lengths.

On the other hand, we \emph{can} give a heuristic explanation for the fact that the lower indices in the $M$-coefficients in Theorem~\ref{thm: many factors} are shifted by $1$ compared with those in Theorem~\ref{q=1 many factors}: the matrix group $\SS_n$ does not act irreducibly in its standard representation, as every permutation fixes the all-ones vector.  Thus, morally, the subtraction of $1$ should correct for the irrelevant dimension of fixed space.
\end{remark}

Our approach is to follow a well-worn path, based on 
character-theoretic techniques that go back to Frobenius.  In the case
of the symmetric group, this approach has been extensively developed
in the '80s and '90s, notably in work of Stanley~\cite{Sta81},
Jackson~\cite{Jackson1, Jackson2},
Hanlon--Stanley--Stembridge~\cite{HSS}, and
Goupil--Schaeffer~\cite{GoupilSchaeffer}; see also the survey \cite{GouldenJacksonSurvey}. In $\GL_n(\Fq)$, the necessary character theory was worked out by Green \cite{Green}, building on work of Steinberg \cite{Steinberg}.  This approach has been used recently by the first-named author and coauthors to count factorizations of Singer cycles into reflections \cite{LRS} and to count \emph{genus-$0$} factorizations (that is, those in which the codimensions of the fixed spaces of the factors sum to the codimension of the fixed space of the product) of regular elliptic elements \cite{HLR}.  The current work subsumes these previous results while requiring no new character values.  (See Remarks~\ref{rmk: recovering LRS} and~\ref{rmk: genus 0 many factors} for derivations of these earlier results from Theorem~\ref{thm: many factors}.)

The plan of the paper is as follows.  In Section~\ref{method of
  attack}, we provide background, including an overview of the
character-theoretic approach to problems of this sort and a quick
introduction to the character theory of $\GL_n(\Fq)$ necessary for our
purposes.  Theorems~\ref{thm: two factors} and~\ref{thm: many factors}
are proved in Sections~\ref{section: two factors} and~\ref{section:
  many factors}, respectively.  The \defn{genus} of a factorization
counted in $a_{r,s}(q)$ is $n-r-s$. In Section~\ref{section: asymptotics},
we give an application of Theorem~\ref{thm: two factors} to asymptotic
enumeration, giving the precise growth rate 
$\Theta(q^{(n + g)^2/2}/|\GL_g(\Fq)|)$ of the number of factorizations of
fixed genus $g$ of a regular elliptic element in $\GL_n(\Fq)$ as a
product of two factors, as $n \to \infty$.  Finally, in
Section~\ref{section: conclusion} we give a few additional remarks and
open questions.  In particular, in Section~\ref{sec:combproofs} we
briefly discuss the history of \emph{combinatorial} approaches to
Theorems~\ref{q=1 two factors} and~\ref{q=1 many factors}, and
discuss whether this story can be given a $q$-analogy.

\subsection*{Acknowledgements}

We are grateful to Olivier Bernardi, Valentin F\'eray, and Vic Reiner for helpful comments and suggestions during the preparation of this paper.  We are deeply indebted to Dennis Stanton for his help and guidance in dealing with $q$-series manipulations at the core of Sections~\ref{section: two factors} and~\ref{section: many factors}.  JBL was supported by NSF grant DMS-1401792.  

\section{Regular elliptics, character theory, and the symmetric group approach}
\label{method of attack}

\subsection{Singer cycles and regular elliptic elements}
The field $\F_{q^n}$ is an $n$-dimensional vector space over $\Fq$, and multiplication by a fixed element in the larger field is a linear transformation.  Thus, any choice of basis for $\F_{q^n}$ over $\Fq$ gives a natural inclusion $\F_{q^n}^\times \hookrightarrow G_n := \GL_n(\Fq)$.  The image of any cyclic generator $c$ for $\F_{q^n}^\times$ under this inclusion is called a \defn{Singer cycle}.  A strong analogy between Singer cycles in $G_n$ and $n$-cycles in $\SS_n$ has been established over the past decade or so, notably in work of Reiner, Stanton, and collaborators \cite{CSP, ReinerStantonWebb, LRS, HLR}.  As one elementary example of this analogy, the Singer cycles act transitively on the lines of $\Fq^n$, just as the $n$-cycles act transitively on the points $\{1, \ldots, n\}$.

A more general class of elements of $G_n$, containing the Singer
cycles, is the set of images (under the same inclusion) of field generators~$\sigma$ for $\F_{q^n}$ over $\Fq$.  (That is, one should have $\Fq[\sigma] = \F_{q^n}$ but not necessarily $\{\sigma^m \mid m \in \ZZ\} = \F_{q^n}^\times$.)  Such elements are called \defn{regular elliptic elements}.  They may be characterized in several other ways; see \cite[Prop.~4.4]{LRS}.

\subsection{The character-theoretic approach to factorization problems}
\label{character approach in S_n}
Given a finite group $G$, let 
$\Irr(G)$ be the collection of its irreducible (finite-dimensional,
complex) representations $V$.  For each $V$ in $\Irr(G)$, denote by
$\deg(V) := \dim_\CC V$ its \defn{degree}, 
by $\chi^V(g) := \Tr(g: V \rightarrow V)$ its \defn{character value} at $g$,
and by $\normchi^V(g):=\chi^V(g) / \deg(V)$ its
\defn{normalized character value}. The functions $\chi^V(-)$ and $\normchi^V(-)$ on $G$
extend by $\CC$-linearity to functionals on the group algebra $\CC[G]$.  

The following result allows one to express every factorization problem of the form we consider as a computation in terms of group characters.

\begin{proposition}[Frobenius \cite{frobenius}]
\label{general factorization prop}
Let $G$ be a finite group, and $A_1,\ldots,A_\ell \subseteq G$
unions of conjugacy classes in $G$.  Then for $g$ in $G$,
the number of ordered factorizations $(t_1,\ldots,t_\ell)$ with
$g=t_1 \cdots t_\ell$ and $t_i$ in $A_i$ for $i=1,2,\ldots,\ell$ is
\begin{equation}
\label{frobenius factorization equation}
\frac{1}{|G|} \sum_{V \in \Irr(G)} 
 \deg(V) \chi^V(g^{-1}) \cdot \normchi^V(z_1) \cdots \normchi^V(z_\ell),
\end{equation}
where $z_i:=\sum_{t \in A_i} t$ in $\CC[G]$.
\end{proposition}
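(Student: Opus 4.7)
The plan is to apply the classical argument of Frobenius: identify the desired count as a coefficient in a group-algebra product of central elements, then evaluate that coefficient using the regular representation. The proof breaks into three short movements.

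First, by the definition of multiplication in $\CC[G]$, the coefficient of $g$ in the group-algebra product $z_1 \cdots z_\ell$ is precisely the number $N(g)$ of ordered tuples $(t_1, \ldots, t_\ell) \in A_1 \times \cdots \times A_\ell$ with $t_1 \cdots t_\ell = g$, which is the quantity to be computed. Second, because each $A_i$ is a union of conjugacy classes, each $z_i$ is central in $\CC[G]$; hence by Schur's lemma $z_i$ acts on every irreducible $V \in \Irr(G)$ as a scalar, and taking the trace identifies that scalar as $\chi^V(z_i)/\deg(V) = \normchi^V(z_i)$. Consequently, the product $z_1 \cdots z_\ell$ acts on $V$ as multiplication by $\prod_i \normchi^V(z_i)$.

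Third, to extract $N(g)$ I would compute the trace of left multiplication by $z_1 \cdots z_\ell \cdot g^{-1}$ on the regular representation $\CC[G]$ in two ways. On the one hand, left multiplication by a group element $h$ has trace $|G|\cdot \delta_{h,1}$ on $\CC[G]$, so extending by linearity the trace equals $|G|$ times the coefficient of the identity element in $z_1 \cdots z_\ell \cdot g^{-1}$, which is $|G| \cdot N(g)$ (the coefficient of $1$ in $w \cdot g^{-1}$ is the coefficient of $g$ in $w$). On the other hand, using the decomposition $\CC[G] \cong \bigoplus_{V \in \Irr(G)} V^{\oplus \deg(V)}$, the same trace equals $\sum_V \deg(V) \cdot \Tr_V(z_1 \cdots z_\ell \cdot g^{-1}) = \sum_V \deg(V) \cdot \prod_i \normchi^V(z_i) \cdot \chi^V(g^{-1})$, using that $z_1 \cdots z_\ell$ acts as the scalar computed in the previous paragraph. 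Equating the two expressions and dividing by $|G|$ yields the claimed formula.

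There is no genuine obstacle to the proof itself: this is a century-old computation whose only substantive inputs are the centrality of class sums, Schur's lemma, and the decomposition of the regular representation. The only place one has to be careful is the bookkeeping that identifies the coefficient of $g$ in $z_1 \cdots z_\ell$ with the coefficient of the identity in $z_1 \cdots z_\ell \cdot g^{-1}$. The real work, which occupies the remainder of the paper, consists in evaluating the resulting character sum for the specific group $G = \GL_n(\F_q)$, the specific class unions $A_i$ (elements of prescribed fixed-space dimension), and $g$ a regular elliptic element.
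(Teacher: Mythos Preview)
Your argument is correct and is the standard proof of this classical result. Note that the paper itself does not supply a proof of this proposition at all: it is stated with attribution to Frobenius and then used as a black box, so there is nothing to compare your approach against.
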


In practice, it is often the case that one does not need the full set of character values that appear in \eqref{frobenius factorization equation} in order to evaluate the sum.  As an example of this phenomenon, we show how to derive Theorem~\ref{q=1 two factors} without needing access to the full character table for the symmetric group $\SS_n$.  This argument also provides a template for our work in $\GL_n(\Fq)$.

Let $c$ be the long cycle $c = (1, 2, \ldots, n)$ in $\SS_n$.  Consider the generating function 
\begin{equation}
\label{definition of the gf in S_n}
F(x, y) = \sum_{1 \leq r, s \leq n} a_{r, s} \cdot x^r y^s
\end{equation}
for the number $a_{r, s}$ of factorizations $c = u \cdot v$ in which $u$, $v$ have $r$, $s$ cycles, respectively.  By Proposition~\ref{general factorization prop}, we have that
\[
a_{r, s} = \frac{1}{n!} \sum_{V \in \Irr(\SS_n)} 
 \deg(V) \chi^V(c^{-1}) \cdot \normchi^V(z_r) \normchi^V(z_s),
\]
where $z_i$ is the formal sum in $\CC[\SS_n]$ of all elements with $i$ cycles.  Substituting this in \eqref{definition of the gf in S_n} gives
\begin{align*}
F(x, y) & = \frac{1}{n!} \sum_{1 \leq r, s \leq n} \sum_{V \in \Irr(\SS_n)} 
 \deg(V) \chi^V(c^{-1}) \cdot \normchi^V(z_r) \normchi^V(z_s) \cdot x^r y^s \\
       & = \frac{1}{n!} \sum_{V \in \Irr(\SS_n)} \deg(V) \chi^V(c^{-1}) \cdot f_V(x) f_V(y),
\end{align*}
where 
$
f_V(x) := \sum_{r = 1}^n \normchi^V(z_r) x^r.
$
The irreducible representations of $\SS_n$ are indexed by partitions $\lambda$ of $n$, and we write $\chi^V = \chi^\lambda$ if $V$ is indexed by $\lambda$.  The degree of a character is given by the \emph{hook-length formula}.  It follows from the \emph{Murnaghan--Nakayama rule} that the character value $\chi^{\lambda}(c^{-1})$ on the $n$-cycle $c^{-1}$ is equal to $0$ unless $\lambda = \hook{d}{n}$ is a \defn{hook shape}, in which case $\chi^{\hook{d}{n}}(c^{-1}) = (-1)^d$.  Thus it suffices to understand $f_\lambda(x)$ for hooks $\lambda$.  One can show that  
\[
f_{\hook{d}{n}}(x) = (x - d)_n := (x - d) \cdot (x - d + 1) \cdot (x - d + 2) \cdots (x - d + n - 1) = n! \cdot \sum_{k = d + 1}^n \binom{n - 1 - d}{k - 1 - d} \cdot \binom{x}{k}
,
\]
and the result follows by
identities for binomial coefficients
after
extracting the coefficient of $\binom{x}{t}
\binom{y}{u}$.

\subsection{Character theory of the finite general linear group}

In this section, we give a (very) brief overview of the character theory of $G_n = \GL_n(\Fq)$, including the specific character values necessary to prove the main results in this paper.  For a proper treatment, see \cite[Ch.~3]{Zelevinsky} or \cite[\S4]{GrinbergReiner}.  Throughout this section, we freely conflate the (complex, finite-dimensional) representation $V$ for $\GL_n(\Fq)$ with its character $\chi^V$.

The basic building-block of the character theory of $G_n$ is \defn{parabolic} (or \defn{Harish-Chandra}) \defn{induction}, defined as follows.  For nonnegative integers $a, b$, let $P_{a, b}$ be the parabolic subgroup
\[
P_{a, b} = \left \{ \begin{bmatrix} A & C \\ 0 & B \end{bmatrix} \colon
A \in G_a, B \in G_b, \textrm{ and } C \in \Fq^{a \times b} \right\}
\]
of $G_{a + b}$.  Given characters $\chi_1$ and $\chi_2$ for $G_a$ and $G_b$, respectively, one obtains a character $\chi_1 * \chi_2$ for $G_{a + b}$ by the formula
\[
(\chi_1 * \chi_2)(g) = \frac{1}{|P_{a, b}|} \sum_{\substack{h \in G_{a + b} \colon \\hgh^{-1} \in P_{a, b}}} \chi_1(A)\chi_2(B),
\]
where $A$ and $B$ are the diagonal blocks of $hgh^{-1}$ as above.

Many irreducible characters for $G_n$ may be obtained as irreducible
components of induction products of characters on smaller general
linear groups.  A character $\C$ for $G_n$ that cannot be so-obtained
is called \defn{cuspidal}, of \defn{weight} $\wt(\C) = n$.  The set of
cuspidals for $G_n$ is denoted $\Cusp_n$, and the set of all cuspidals
for all general linear groups is denoted $\Cusp = \sqcup_{n \geq 1}
\Cusp_n$.  (Though we will not need this, we note that cuspidals may
be indexed by irreducible polynomials over $\Fq$, or equivalently by
primitive $q$-colored necklaces.)

Let $\Par$ denote the set of all integer partitions.  The set of \emph{all} irreducible characters for $G_n$ is indexed by functions
$
\llambda \colon \Cusp \to \Par
$
such that
\[
\sum_{\C \in \Cusp} \wt(\C) \cdot |\llambda(\C)| = n.
\]
(In particular, $\llambda(\C)$ must be equal to the empty partition for all but finitely many choices of $\C$.)  A particular representation of interest is the trivial representation $\1$ for $\GL_1(\Fq)$.  (The trivial representation for $\GL_n(\Fq)$ is indexed by the function associating to $\1$ the partition $\langle n \rangle$.)  If $V$ is indexed by $\llambda$ having support on a single cuspidal representation $\C$, we call $V$ \defn{primary} and denote it by the pair $(\C, \lambda)$ where $\lambda = \llambda(\C)$.

\emph{A priori}, in order to use Proposition~\ref{general factorization prop} in our setting, we require the degrees and certain other values of all irreducible characters for $G_n$.  In fact, however, we will only need a very small selection of them.  The character degrees were worked out by Steinberg~\cite{Steinberg} and Green~\cite{Green}, and the special case relevant to our work is
\begin{equation}
\label{hook-degree-formula}
\deg\left( \chi^{\1, \hook{d}{n}} \right)=
q^{\binom{d+1}{2}} \qbin{n-1}{d}{q}.
\end{equation}
The relevant character values on regular elliptic elements were computed by Lewis--Reiner--Stanton.
\begin{proposition}[{\cite[Prop.~4.7]{LRS}}]
\label{Singer-cycle-character-values}
Suppose $c$ is a regular elliptic element and $\chi^{\llambda}$ an irreducible character of $G_n$.
\begin{compactenum}[(i)]
\item One has $
\chi^{\llambda}(c) = 0
$
unless $\chi^{\llambda}$ is a primary irreducible character $\chi^{U,\lambda}$
for some $s$ dividing $n$ and some cuspidal character $U$ in $\CCC_s$, and 
$\lambda = \hook{d}{\frac{n}{s}}$ is a hook-shaped partition of $n/s$.  
\item If $U = \1$ is the trivial character then 
\[
\chi^{\1, \hook{d}{n}}(c) = (-1)^d.
\]
\end{compactenum}
\end{proposition}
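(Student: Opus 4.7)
The plan is to exploit two structural facts: that $c$ preserves no proper nonzero subspace of $\F_q^n$, and a $\GL_n(\F_q)$-analogue of the Murnaghan--Nakayama rule. Because $c$ is regular elliptic, its characteristic polynomial is irreducible of degree $n$, so $c$ has no invariant subspace of dimension strictly between $0$ and $n$. In particular, no $G_n$-conjugate of $c$ lies in any proper parabolic subgroup $P_{a,b}$ with $a,b \geq 1$, since every element of such a $P_{a,b}$ preserves the standard $a$-dimensional coordinate subspace. Reading this off the defining formula
\[
(\chi_1 * \chi_2)(c) = \frac{1}{|P_{a,b}|} \sum_{\substack{h \in G_{a+b} \\ hch^{-1} \in P_{a,b}}} \chi_1(A)\chi_2(B),
\]
the sum is empty, so every properly parabolically induced character vanishes at $c$.

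For part (i), I would then invoke Green's structure theory: any irreducible $\chi^{\llambda}$ whose index $\llambda$ is supported on more than one cuspidal is itself parabolically induced (across the Levi corresponding to the cuspidal supports) from a product of primary characters on smaller general linear groups, and so vanishes at $c$ by the observation above. This reduces the problem to primary characters $\chi^{U,\lambda}$ with $U \in \Cusp_s$ and $\lambda \vdash n/s$, and in particular forces $s \mid n$. Here I would appeal to Green's Murnaghan--Nakayama-type formula for a primary character evaluated on an element whose rational canonical form consists of a single block for an irreducible polynomial of degree $n$: the value $\chi^{U,\lambda}(c)$ equals, up to a scalar depending only on $U$, the symmetric-group character $\chi^{\lambda}_{\SS_{n/s}}$ on an $(n/s)$-cycle, which vanishes unless $\lambda$ is a hook of size $n/s$.

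For part (ii), I would specialize to $U = \1$ (so $s = 1$): Green's formula collapses to the classical Murnaghan--Nakayama computation for $\SS_n$-characters on an $n$-cycle, and the hook character value $\chi^{\hook{d}{n}}_{\SS_n}$ on an $n$-cycle is $(-1)^d$, with no cuspidal twist to account for. The principal obstacle is a clean statement and derivation of Green's Murnaghan--Nakayama rule for primary characters in a form that makes the reduction to hooks transparent; one workable approach, taken in the parent reference \cite{LRS}, is to express the primary character $\chi^{U,\lambda}$ via Green's plethystic symmetric-function machinery and then evaluate the resulting expression on a conjugacy class whose $\F_q$-invariants are encoded by a single irreducible polynomial of degree $n$, at which point the classical vanishing of non-hook Schur functions on the power-sum $p_n$ delivers both halves of the proposition simultaneously.
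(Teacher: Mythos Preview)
The paper does not prove this proposition: it is quoted verbatim from \cite[Prop.~4.7]{LRS} and used as a black box. There is therefore no in-paper proof to compare your proposal against.

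That said, your outline is correct and is essentially the argument carried out in the original reference. The two structural observations you isolate are exactly the right ones: (a) a regular elliptic element $c$ has irreducible characteristic polynomial, hence stabilizes no proper nonzero subspace, so the defining sum for any proper Harish-Chandra induction $(\chi_1 * \chi_2)(c)$ is empty; and (b) in Green's parametrization, an irreducible $\chi^{\llambda}$ with support on more than one cuspidal is literally equal to the $*$-product of the corresponding primary characters, hence vanishes at $c$ by (a). The remaining work, evaluating a primary $\chi^{U,\lambda}$ on $c$, is handled in \cite{LRS} via Green's symmetric-function formalism, which ultimately reduces to the vanishing of $\langle s_\lambda, p_{n/s} \rangle$ for non-hook $\lambda$ and the value $(-1)^d$ for hooks---exactly the mechanism you describe. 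Your acknowledgement that this last step is the ``principal obstacle'' is apt: making Green's plethystic formula explicit enough to read off the hook reduction is where all the technical content lives, and your proposal defers that to the reference rather than executing it.
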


Finally, denote by $z_k$ the formal sum (in $\CC[G_n]$) of all elements of $G_n$ having fixed space dimension\footnote{Caution: our indexing here differs from that of \cite{HLR}, where the symbol $z_k$ is used to represent the sum of elements with fixed space \emph{co}dimension $k$.} equal to $k$.  Huang--Lewis--Reiner computed the relevant character values on the $z_k$.
\begin{proposition}[{\cite[Prop.~4.10]{HLR}}]
\label{character values prop}
\begin{compactenum}[(i)]
\item For any $s$ dividing $n$, any cuspidal representation $U$ in $\CCC_s$ other than $\1$, and any partition $\lambda$ of $\frac{n}{s}$, we have
\[
\normchi^{U, \lambda}(z_r) = (-1)^{n - r} q^{\binom{n - r}{2}} \qbin{n}{r}{q}.
\]
\item For $U = \1$ and $\lambda = \hook{d}{n}$ a hook, we have
\begin{multline*}
\normchi^{\1, \hook{d}{n}}(z_r) = 
(-1)^{n - r} q^{\binom{n - r}{2}} \Big(\qbin{n}{r}{q} + \\
\frac{(1-q)[n]_q}{[r]!_q} \cdot \sum_{j = 1}^{n - \max(r, d)} q^{jr - d} \cdot \frac{[n - j]!_q }{[n - r - j]!_q}\cdot(q^{n-d-j+1};q)_{j-1}\Big).
\end{multline*}
\end{compactenum}
\end{proposition}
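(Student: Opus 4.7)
The plan is to compute $\chi^V(z_r)$ by combining Möbius inversion on the lattice of $\F_q$-subspaces with Harish--Chandra restriction. For each subspace $W \subseteq \F_q^n$ of dimension $s$, let $F_W = \{g \in G_n : g|_W = \mathrm{id}_W\}$ be its pointwise stabilizer. Since $F_W$ is a subgroup, $\sum_{g \in F_W} \chi^V(g) = |F_W| \cdot \dim V^{F_W}$, and by conjugation-invariance this depends only on $s$. Swapping the order of summation in $\sum_{g} \qbin{\dim \mathrm{Fix}(g)}{s}{q} \chi^V(g) = \sum_{W:\dim W = s} \sum_{g \in F_W} \chi^V(g)$ yields
\[
f_s \;:=\; \sum_{r} \qbin{r}{s}{q} \chi^V(z_r) \;=\; \qbin{n}{s}{q} \cdot |F_W| \cdot \dim V^{F_W},
\]
and standard $q$-binomial inversion recovers
\[
\chi^V(z_r) \;=\; \sum_{s \geq r} (-1)^{s-r} q^{\binom{s-r}{2}} \qbin{s}{r}{q} f_s.
\]

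Next, I will evaluate $\dim V^{F_W}$. The group $F_W$ is the semidirect product of the unipotent radical of the parabolic $P_{s, n-s}$ with $\{I_s\} \times G_{n-s}$, so $V^{F_W}$ is the trivial-$G_{n-s}$-isotypic component of the Harish--Chandra restriction ${}^{\ast}R^{G_n}_{G_s \times G_{n-s}} V$, viewed as a $G_s$-representation. By Green's branching rules, for a primary $V = \chi^{U,\lambda}$ this restriction equals $\sum_{\mu,\nu} c^{\lambda}_{\mu,\nu} \, \chi^{U,\mu} \otimes \chi^{U,\nu}$, summed over partitions $\mu, \nu$ with $|\mu|\cdot\wt(U) = s$ and $|\nu|\cdot\wt(U) = n-s$, and weighted by Littlewood--Richardson coefficients. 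In case (i), with $U$ a non-trivial cuspidal, the trivial character of $G_{n-s}$ is $\chi^{\1,(n-s)}$, attached to the \emph{different} cuspidal $\1 \in \Cusp_1$, and therefore cannot appear as a second tensor factor when $s < n$. Hence $f_s = 0$ for $s < n$ while $f_n = \deg V$, and the inversion formula collapses immediately to $\chi^V(z_r) = (-1)^{n-r} q^{\binom{n-r}{2}} \qbin{n}{r}{q} \deg V$; normalizing yields (i).

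In case (ii), with $V = \chi^{\1, \hook{d}{n}}$, the Pieri rule identifies the partitions $\mu$ of $s$ for which $\hook{d}{n}/\mu$ is a horizontal strip of size $n-s$: generically there are exactly two such $\mu$, both themselves hook-shaped, whose dimensions are supplied by \eqref{hook-degree-formula}. Together with $|F_W| = q^{s(n-s)} \cdot |G_{n-s}|$, substitution into the inversion formula produces an explicit double sum (over $s$ and the choice of hook); after reindexing (letting $j = n - s$), this collapses via terminating basic hypergeometric identities of $q$-Chu--Vandermonde type to the closed form in (ii). This final $q$-series manipulation will be the main obstacle; case (i) by contrast drops out essentially for free once the vanishing of the relevant branching terms is observed.
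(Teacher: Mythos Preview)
The paper does not actually prove this proposition: it is quoted verbatim from \cite[Prop.~4.10]{HLR} and used as a black box.  So there is no ``paper's own proof'' here to compare against.  That said, your approach is the natural one and is essentially how such results are obtained.

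For part~(i) your argument is correct and complete.  The key point---that for a primary irreducible $\chi^{U,\lambda}$ with $U\neq\1$ the Harish--Chandra restriction to $G_s\times G_{n-s}$ never contains the trivial $G_{n-s}$-representation when $s<n$---immediately forces $f_s=0$ for $s<n$, and the single surviving term $f_n=\deg V$ gives the stated formula after inversion.  (One cosmetic issue: you reuse the letter $s$ for the subspace dimension, clashing with the $s=\wt(U)$ in the proposition statement.)

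For part~(ii) the strategy is right, but the proof is only an outline.  The Pieri analysis is correct: for $\lambda=\hook{d}{n}$ and $\nu=(n-s)$ one finds $\mu\in\{\hook{d}{s},\,\hook{d-1}{s}\}$ generically, with edge cases when $s\le d$ or $s=n$.  Plugging the hook degrees~\eqref{hook-degree-formula} and $|F_W|=q^{s(n-s)}|G_{n-s}|$ into the inversion formula does produce an explicit double sum.  However, you explicitly defer the reduction of this double sum to the single sum over $j$ appearing in the statement (``this final $q$-series manipulation will be the main obstacle''), and that reduction is not routine: the target expression is intricate, with the range $1\le j\le n-\max(r,d)$ and the factor $(q^{n-d-j+1};q)_{j-1}$ arising in a non-obvious way from the two hook degrees.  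Without at least indicating which of the two $\mu$-terms contributes which piece, or how the $s$-sum telescopes, part~(ii) remains a plan rather than a proof.
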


\section{Factoring regular elliptic elements into two factors}
\label{section: two factors}
\subsection{Proof of Theorem~\ref{thm: two factors}}
In this section, we prove Theorem~\ref{thm: two factors} by following the approach for $\SS_n$ sketched in Section~\ref{character approach in S_n}.  Let $c$ be a regular elliptic element in $G = \GL_n(\Fq)$, and let $a_{r, s}(q)$ be the number of pairs $(u, v)$ of elements of $G$ such that $u \cdot v = c$ and $u$, $v$ have fixed space dimensions $r$, $s$, resprectively.
Define the generating function
\[
F(x, y) := \sum_{r, s\geq 0} a_{r, s}(q) x^r y^s.
\]
Our goal is to rewrite this generating function in the basis $\frac{(x; q^{-1})_t}{(q; q)_t}\frac{(y; q^{-1})_u}{(q; q)_u}$ of polynomials $q$-analogous to the binomial coefficients.  

By Proposition~\ref{general factorization prop}, we may write
\begin{equation}
\label{factorization count}
a_{r, s}(q) = \frac{1}{|G|}\sum_{V \in \Irr(G)} \deg(V) \chi^V(c^{-1}) \cdot \normchi^V(z_{r}) \cdot \normchi^V(z_{s})
\end{equation}
where $z_k$ is defined (as above) to be the element of the group algebra $\CC[G]$ equal to the sum of all elements of fixed space dimension $k$.
Thus, our generating function is given by
\begin{align}
F(x, y) % & = \sum_{r, s\geq 0} a_{r, s}(q) x^r y^s   \notag\\
& = \sum_{r, s\geq 0} x^r y^s  \frac{1}{|G|}\sum_{V \in \Irr(G)} \deg(V) \chi^V(c^{-1}) \cdot \normchi^V(z_{r}) \cdot \normchi^V(z_{s})    \notag\\
& = \frac{1}{|G|}\sum_{V \in \Irr(G)} \deg(V) \chi^V(c^{-1}) \cdot \left(\sum_r  \normchi^V(z_{r}) x^r\right) \cdot  \left(\sum_s  \normchi^V(z_{s}) y^s\right)    \notag\\
\label{F in terms of f}
& =\frac{1}{|G|} \sum_{V \in \Irr(G)} \deg(V) \chi^V(c^{-1}) \cdot f_V(x) \cdot f_V(y),
\end{align}
where 
\[
f_V(x) := \sum_{r = 0}^n \normchi^V(z_r) \cdot x^{r}.
\]

By Proposition~\ref{Singer-cycle-character-values}, the character value $\chi^V(c^{-1})$ is typically $0$, and so in order to prove Theorem~\ref{thm: two factors} it suffices to compute $f_V$ for only a few select choices of $V$.  We do this now.

\begin{proposition}
\label{prop: computing f}
If $V = (U, \lambda)$ for $U \neq \1$ we have
\begin{equation}
\label{easy f}
f_{U, \lambda}(x) = |G| \cdot \frac{(x; q^{-1})_n}{(q; q)_n},
\end{equation}
while if $V = (\1, \hook{d}{n})$ we have
\begin{equation}
\label{expressed in new basis}
f_{\1, \hook{d}{n}}(x) = |G| \cdot  \left(\frac{(x; q^{-1})_n}{(q; q)_n} + q^{-d} \cdot \sum_{m = d}^{n - 1} \frac{[m]!_q \cdot [n - d - 1]!_q}{[m - d]!_q \cdot [n - 1]!_q} \cdot \frac{(x; q^{-1})_m}{(q; q)_m}\right).
\end{equation}
\end{proposition}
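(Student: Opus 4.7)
The plan is to substitute the explicit character values from Proposition~\ref{character values prop} into the definition $f_V(x) = \sum_{r=0}^{n} \normchi^V(z_r) \cdot x^r$ and rearrange each case into the desired $q$-binomial basis $\frac{(x;q^{-1})_m}{(q;q)_m}$. The fundamental tool is the $q$-binomial theorem in the form
\[
\frac{(x;q^{-1})_m}{(q;q)_m} \;=\; \frac{1}{(q;q)_m}\sum_{r=0}^{m}(-1)^r q^{\binom{m-r}{2}-\binom{m}{2}}\qbin{m}{r}{q} x^r,
\]
obtained from $(z;q)_m = \sum_r \qbin{m}{r}{q}(-z)^r q^{\binom{r}{2}}$ via $q \mapsto q^{-1}$ combined with $\qbin{m}{r}{q^{-1}} = q^{-r(m-r)}\qbin{m}{r}{q}$. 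Together with the factorization $|G| = (-1)^n q^{\binom{n}{2}}(q;q)_n$ (reading $(q;q)_n$ as a formal Pochhammer symbol), this immediately yields
\[
|G|\cdot \frac{(x;q^{-1})_n}{(q;q)_n} \;=\; \sum_{r=0}^{n}(-1)^{n-r}q^{\binom{n-r}{2}}\qbin{n}{r}{q} x^r,
\]
which by Proposition~\ref{character values prop}(i) is exactly $f_{U,\lambda}(x)$ when $U \neq \1$, proving \eqref{easy f}. The same calculation also identifies the first summand $|G| \cdot \frac{(x;q^{-1})_n}{(q;q)_n}$ of \eqref{expressed in new basis} with the contribution of the $\qbin{n}{r}{q}$-term in Proposition~\ref{character values prop}(ii).

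For the remaining piece of \eqref{expressed in new basis}, I must show that the correction term (the inner $j$-sum in Proposition~\ref{character values prop}(ii)) contributes exactly $|G|q^{-d}\sum_{m=d}^{n-1}\frac{[m]!_q\,[n-d-1]!_q}{[m-d]!_q\,[n-1]!_q}\cdot\frac{(x;q^{-1})_m}{(q;q)_m}$. The natural strategy is to reindex by $m := n-j$, swap the order of summation so that $m$ runs over $\{d,d+1,\ldots,n-1\}$ and $r$ runs over $\{0,1,\ldots,m\}$ (the constraint $j \leq n-\max(r,d)$ translates precisely to $r \leq m$ once $m \geq d$), and then expand $\frac{(x;q^{-1})_m}{(q;q)_m}$ on the target side via the $q$-binomial theorem above. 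These steps reduce the identity to a single coefficient-level equation at each pair $(m,r)$ with $0 \leq r \leq m$ and $d \leq m \leq n-1$.

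That coefficient equation simplifies cleanly using $(q^{m-d+1}; q)_{n-m-1} = (q;q)_{n-d-1}/(q;q)_{m-d}$ and the basic conversion $(q;q)_k = (1-q)^k [k]!_q$: after these substitutions and absorbing $|G|$, the signs agree via $(-1)^{n-r} = (-1)^{n+r}$, the $(1-q)^{n-m}$ factors and the rational $[\cdot]!_q$-parts line up by inspection, and the equality of $q$-exponents collapses to the elementary identity $\binom{k-r}{2} - \binom{k}{2} = -kr + \binom{r+1}{2}$. The main obstacle I anticipate is the bookkeeping in this double-sum swap and exponent check---keeping signs, indices, and $q$-powers aligned simultaneously. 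However, no genuine $q$-series summation transform should be required: once both sides are rewritten as double sums in $(m, r)$, the identity holds \emph{termwise}, which is what makes this verification tractable.
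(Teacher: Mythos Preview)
Your argument is correct, and for \eqref{easy f} it coincides with the paper's. For \eqref{expressed in new basis} you take a slightly different (and somewhat cleaner) route than the paper does. The paper works in the \emph{target} basis: it uses the inverse change of basis \eqref{q-binomial 2} to write the coefficient $c_m$ of $\frac{(x;q^{-1})_m}{(q;q)_m}$ in $f_{\1,\hook{d}{n}}(x)$ as a double sum over $r$ and $j$, then swaps the order of summation and collapses both sums by two applications of the Kronecker-delta identity $\sum_{i=0}^k (-1)^i q^{\binom{i}{2}}\qbin{k}{i}{q}=\delta_{k,0}$; the surviving term is precisely $j=n-m$. You instead work in the \emph{source} basis: you reindex $j\mapsto m=n-j$ at the outset, swap, expand the target side via \eqref{q-binomial 1}, and then observe that the resulting double sums over $(m,r)$ agree \emph{termwise}. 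Your termwise check is valid (the $q$-exponent identity you quote is exactly what is needed), and it sidesteps the delta-identity collapses entirely. The two approaches are dual---one converts source to target basis, the other target to source---but yours makes transparent why no nontrivial $q$-summation is required, which is obscured in the paper's presentation.
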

The proof is a reasonably straightforward computation using Proposition~\ref{character values prop}, the $q$-binomial theorem (essentially \cite[(1.87)]{ec1})
\begin{equation}
\label{q-binomial 1}
\frac{(x; q^{-1})_m}{(q; q)_m} = \frac{1}{(q; q)_m q^{\binom{m}{2}}} \sum_{k = 0}^m (-1)^k q^{\binom{m - k}{2}} \qbin{m}{k}{q} \cdot x^k,
\end{equation}
and its inverse
\begin{equation}
\label{q-binomial 2}
x^k = \sum_{m = 0}^k (-1)^m q^{\binom{m}{2}} (q^k; q^{-1})_m \cdot \frac{(x; q^{-1})_m}{(q; q)_m}.
\end{equation}
It is hidden away in Appendix~\ref{calculation of f}.

We continue studying the factorization generating function $F(x, y)$.  We split the sum \eqref{F in terms of f} according to whether $V$ is a primary irreducible over the cuspidal $\1$ of hook shape:
\begin{multline}
\label{manipulating F}
|G| \cdot F(x, y)  = \sum_{V \neq (\1, \hook{d}{n})} \deg(V) \chi^V(c^{-1}) \cdot f_V(x) \cdot f_V(y) + {} \\
 \sum_{d = 0}^{n - 1} \deg(\1, \hook{d}{n}) \chi^{\1, \hook{d}{n}}(c^{-1}) \cdot f_{\1, \hook{d}{n}}(x) \cdot f_{\1, \hook{d}{n}}(y).
\end{multline}
We use Proposition~\ref{Singer-cycle-character-values}(i) and \eqref{easy f} to rewrite the first sum on the right side of \eqref{manipulating F} as
\begin{align*}
\sum_{V \neq (\1, \hook{d}{n})} 
\hspace{-.25in}
\deg(V) %\cdot 
\chi^V(c^{-1}) \cdot 
f_V(x) %\cdot 
f_V(y) 
& = 
 \sum_{V = (U, \lambda), U \neq \1} \deg(V) \chi^V(c^{-1}) \cdot f_V(x) f_V(y) \\
& =
\sum_{V = (U, \lambda), U \neq \1} \deg(V) \chi^V(c^{-1}) \cdot |G|^2 \cdot \basis{x}{n} \basis{y}{n} \\
& = |G|^2 \basis{x}{n} \basis{y}{n} \sum_{V \neq (\1, \hook{d}{n})} \deg(V) \chi^V(c^{-1}).
\end{align*}
Observe (following the same idea as in \cite[\S4.3]{HLR}) that $\sum_{V \in \Irr(G)} \deg(V) \chi^V$ is the character of the regular representation for $G$.  It follows that $\sum_{V \in \Irr(G)} \deg(V) \chi^V(c^{-1}) = 0$ and so that
\begin{equation}
\label{simple part of F}
\begin{aligned}
\sum_{V \neq (\1, \hook{d}{n})}\hspace{-.25in} \deg(V) %\cdot 
\chi^V(c^{-1}) \cdot 
f_V(x) %\cdot 
f_V(y) 
& = - |G|^2 \basis{x}{n} \basis{y}{n} \sum_{V = (\1, \hook{d}{n})} \deg(V) \chi^V(c^{-1}) \\
& = - |G|^2 \basis{x}{n} \basis{y}{n} \cdot \sum_{d = 0}^{n - 1} q^{\binom{d + 1}{2}} \qbin{n - 1}{d}{q} \cdot (-1)^d \\
&= - (q; q)_{n - 1} \cdot |G|^2 \cdot \basis{x}{n} \basis{y}{n}.
\end{aligned}
\end{equation}

Substituting from \eqref{hook-degree-formula}, \eqref{simple part of F} and Proposition~\ref{Singer-cycle-character-values} into \eqref{manipulating F} yields
\begin{multline}
\label{two factors simplified}
\frac{F(x, y)}{|G|} =  - (q; q)_{n - 1} \basis{x}{n} \basis{y}{n} + {}\\
\frac{1}{|G|^2}\sum_{d = 0}^{n - 1} (-1)^d q^{\binom{d + 1}{2}}
\qbin{n - 1}{d}{q} \cdot f_{\1, \hook{d}{n}}(x) \cdot f_{\1, \hook{d}{n}}(y).
% & =  - (q; q)_{n - 1}|G|^2 \basis{x}{n} \basis{y}{n}  + \sum_{d = 0}^{n - 1} (-1)^d q^{\binom{d + 1}{2}}\qbin{n - 1}{d}{q} \left(x_{(n)} + \sum_{m=d}^{n - 1}  q^{\frac{1}{2} (n-m)(n+m-1)-d}
% (q-1)^{n-m} \frac{[n]_q \cdot [n - d - 1]!_q }{[m-d]!_q} \cdot  x_{(m)}\right) \left( y_{(n)} + \sum_{m=d}^{n - 1}  q^{\frac{1}{2} (n-m)(n+m-1)-d}
% (q-1)^{n-m} \frac{[n]_q \cdot [n - d - 1]!_q }{[m-d]!_q} \cdot  y_{(m)}\right)
% & =   - (q; q)_{n - 1}\basis{x}{n} \basis{y}{n}  + 
% \sum_{d = 0}^{n - 1} (-1)^d q^{\binom{d + 1}{2}}\qbin{n - 1}{d}{q}
% \left(\basis{x}{n} + q^{-d} \cdot \sum_{m = d}^{n - 1} \frac{[m]!_q \cdot [n - d - 1]!_q}{[m - d]!_q \cdot [n - 1]!_q} \cdot X_m\right)
% \left(\basis{y}{n} + q^{-d} \cdot \sum_{m = d}^{n - 1} \frac{[m]!_q \cdot [n - d - 1]!_q}{[m - d]!_q \cdot [n - 1]!_q} \cdot Y_m\right).
% \end{align*}
\end{multline}
In order to finish the proof of Theorem~\ref{thm: two factors}, we must extract the coefficient of $\basis{x}{t} \basis{y}{u}$ from the right side of this equation.  Call this coefficient $b_{t, u}(q)$, so that 
\[
F(x, y) = |G| \cdot \sum_{t, u}  b_{t, u}(q) \cdot \basis{x}{t} \basis{y}{u}.
\]
By \eqref{expressed in new basis}, this extraction reduces to a computation involving $q$-series.  
We begin with a few small simplifications in order to make the computation more reasonable.

First, observe that the subadditivity of fixed space codimensions \cite[Prop.~2.9]{HLR} implies that $a_{r, s}(q) = 0$ if $r + s > n$.  By the triangularity of the change-of-basis formulas \eqref{q-binomial 1}, \eqref{q-binomial 2}, it follows immediately that $b_{t, u}(q) = 0$ if $t + u > n$.  When $r + s = n$, the value of $a_{r, s}(q)$ was computed (by the same methods) in \cite{HLR}, yielding $a_{n, 0}(q) = a_{0, n}(q) = 1$ and $a_{r, s}(q) = q^{2rs - n}(q^n - 1)$ if $r, s$ are both positive.  Again by the triangularity of \eqref{q-binomial 2} we have 
\[
b_{r, s}(q) = a_{r,s}(q) \cdot (-1)^r q^{\binom{r}{2}} (q^r; q^{-1})_r \cdot (-1)^s q^{\binom{s}{2}} (q^s; q^{-1})_s \big\slash |G| 
\]
when $r + s = n$.  This yields $b_{n, 0}(q) = b_{0, n}(q) = 1$ and $b_{r, s}(q) = q^{rs - r - s}(q^n - 1) \frac{[r]!_q [s]!_q}{[n]!_q}$ when $r + s = n$, and this latter formula may be seen by a short computation to be equal to the desired value.  Thus, it remains to compute $b_{t, u}(q)$ when $0 \leq t, u $ and $t + u < n$.  

In this case, it follows from \eqref{two factors simplified} and \eqref{expressed in new basis} that
\begin{equation}
\label{interesting coefficient}
b_{t, u}(q) = \sum_{d = 0}^{\min(t, u)} (-1)^d q^{\binom{d + 1}{2}}\qbin{n - 1}{d}{q} \cdot
q^{-d}\frac{[t]!_q \cdot [n - d - 1]!_q}{[t - d]!_q \cdot [n - 1]!_q} 
\cdot 
q^{-d}\frac{[u]!_q \cdot [n - d - 1]!_q}{[u - d]!_q \cdot [n - 1]!_q}.
\end{equation}
Collecting powers of $q$, cancelling common factors and rewriting with the easy identity
\begin{equation}
\label{falling factorial to pochhammer}
\frac{[m]!_q}{[m - d]!_q} = \frac{q^{md - \binom{d}{2}}}{(q - 1)^d} \cdot (q^{-m}; q)_d
\end{equation} 
shows that $b_{t, u}(q)$ is equal to the $q$-hypergeometric function\footnote{
Recall the definition
\[
_r\phi_s(a_1, \ldots, a_r; \quad b_1, \ldots, b_s; \quad z) := \sum_{d = 0}^\infty \frac{(a_1; q)_d \cdot (a_2; q)_d \cdots (a_r; q)_d}{(q; q)_d \cdot (b_1; q)_d \cdots (b_s; q)_d} \left((-1)^d q^{\binom{d}{2}}\right)^{1 + s - r} z^d.
\]
}
\[
b_{t, u}(q) = {_2\phi_1}(q^{-t}, \; q^{-u}; \quad q^{1-n}; \quad q^{t + u - n}).
\]
This function is almost summable by the $q$-Chu--Vandermonde identity (see \cite[(II.7)]{GasperRahman}), but the power of $q$ in the final parameter is off by one; instead, we use the $_2\phi_1$-to-$_3\phi_2$ identity \cite[(III.7)]{GasperRahman}
\[
_2\phi_1(q^{-k}, \; B; \quad C; \quad z)= \frac{(C/B; q)_k}{(C; q)_k} \cdot {_3\phi_2}(q^{-k}, \; B, \; Bzq^{-k}/C; \quad Bq^{1-k}/C,\; 0; \quad q)
\]
to rewrite
\[
b_{t, u}(q) = \frac{(q^{1 + u - n}; q)_t}{(q^{1-n}; q)_t} {_3\phi_2}(q^{-t}, \; q^{-u}, \; q^{-1}; \quad q^{n - t- u}, \; 0; \quad q).
\]
Since one of the numerator parameters is $q^{-1}$, this $_3\phi_2$ summation truncates after two terms:
\begin{align*}
b_{t, u}(q) & =  \frac{(q^{1 + u - n}; q)_t}{(q^{1-n}; q)_t} \sum_{d = 0}^\infty \frac{(q^{-t}; q)_d (q^{-u}; q)_d (q^{-1}; q)_d}{(q; q)_d (q^{n - t - u}; q)_d} q^{d} \\
& = 
\frac{(q^{1 - n + u}; q)_{t}}{(q^{1-n}; q)_{t}} \cdot \left(1   +    \frac{ (1-q^{-t}) \cdot (1-q^{-u}) \cdot (1-q^{-1}) }{(1 - q) \cdot (1-q^{n-t-u})} \cdot q \right) \\
% & = \frac{(q^{1 - n + u}; q)_{t}}{(q^{1-n}; q)_{t}} \cdot \left(1   -    \frac{ (1-q^{-t}) \cdot (1-q^{-u}) }{ 1-q^{n-t-u}} \right) \\
% & = \frac{(q^{1 - n}; q)_{t + u}}{(q^{1-n}; q)_{t} \cdot (q^{1-n}; q)_{u}} \cdot \left(1   -    \frac{ (1-q^{-t}) \cdot (1-q^{-u}) }{ 1-q^{n-t-u}} \right) 
& = 
     q^{tu} \frac{[n - t - 1]!_q \cdot [n - u - 1]!_q}{[n - t - u - 1]!_q \cdot [n - 1]!_q} \cdot \left(1   -    \frac{ (1-q^{-t}) \cdot (1-q^{-u}) }{ 1-q^{n-t-u}} \right) \\
& =      q^{tu - t - u} \frac{[n - t - 1]!_q \cdot [n - u - 1]!_q}{[n - t - u]!_q \cdot [n - 1]!_q} \cdot \frac{q^{n} - q^{t}-q^{u} + 1}{q - 1}.
\end{align*}
This concludes the proof of Theorem~\ref{thm: two factors}.

\subsection{Additional remarks on Theorem~\ref{thm: two factors}}
We note here two special cases of Theorem~\ref{thm: two factors}; compare with the remarks following \cite[Thm.~1.1]{SchaefferVassilieva}.
\begin{remark}
\label{rmk: total number of factorizations}
If we set $x = y = 1$ in \eqref{eq: two factor theorem}, all terms on the right vanish except $t = u = 0$, leaving
\[
F(1, 1) = |\GL_n(\Fq)| \cdot \frac{[n - 1]!_q \cdot [n - 1]!_q}{[n - 1]!_q \cdot [n]!_q} \cdot \frac{q^n - 1}{q - 1}
= |\GL_n(\Fq)|.
\]
Indeed, for each element $u$ of $\GL_n(\Fq)$ there is exactly one element $v$ such that $u \cdot v = c$.
\end{remark}
\begin{remark}
More generally, if we set $y = 1$ in \eqref{eq: two factor theorem}, we get the generating function $F(x, 1)$ for $\GL_n(\Fq)$ by dimension of fixed space.  On the right side, only terms with $u = 0$ survive, and so
\begin{align*}
F(x, 1) & = |\GL_n(\Fq)| \cdot \left(\frac{(x;q^{-1})_{n}}{(q;q)_{n}} + 
\sum_{t=0}^{n - 1}
q^{-t} \frac{[n-t-1]!_q \cdot [n-1]!_q}{[n - 1]!_q \cdot [n-t]!_q} \cdot \frac{q^n -q^{t}}{q - 1}
\frac{(x;q^{-1})_{t}}{(q;q)_{t}} \right)\\
& = |\GL_n(\Fq)| \cdot \sum_{t=0}^n
\frac{(x;q^{-1})_{t}}{(q;q)_{t}}.
\end{align*}
This formula is analogous to the generating function 
for $\SS_n$ by number of cycles; see also \cite[\S3]{HLR}.
Formulas for the coefficients of this generating function were 
obtained by Fulman \cite{Fulman} (attributed there 
to unpublished work of Rudvalis and Shinoda).  
\end{remark}

\section{Factoring regular elliptic elements into more than two factors}
\label{section: many factors}
\subsection{Proof of Theorem~\ref{thm: many factors}}
We follow the same framework as in Section~\ref{section: two factors}.  Let $a_{r_1, \ldots, r_k}(q)$ be the number of tuples $(g_1, \ldots, g_k)$ of elements of $G = \GL_n(\Fq)$ such that $g_i$ has fixed space dimension $r_i$ for all $i$ and $g_1 \cdots g_k = c$.  Define the generating function
\[
F(x_1, \ldots, x_k) = \sum_{r_1, \ldots, r_k} a_{r_1, \ldots, r_k}(q) x_1^{r_1} \cdots x_k^{r_k}.
\]
The statement we wish to prove asserts a formula for this generating function when expressed in another basis.
Applying Proposition~\ref{general factorization prop} and making calculations analogous to those in \eqref{F in terms of f} gives
\[
|G| \cdot F(\x) = \sum_{V \in \Irr(G)} \deg(V) \chi^V(c^{-1}) f_V(x_1) \cdots f_V(x_k).
\]
The same regular representation trick that leads from \eqref{manipulating F} to \eqref{two factors simplified} works with more variables; it yields
\begin{multline}
\label{many factors simplified}
|G| \cdot F(\x) = -(q; q)_{n - 1} |G|^k \cdot \frac{(x_1; q^{-1})_n}{(q; q)_n} \cdots \frac{(x_k; q^{-1})_n}{(q; q)_n} + {}\\
\sum_{d = 0}^{n - 1} (-1)^d q^{\binom{d + 1}{2}} \qbin{n - 1}{d}{q} f_{\1, \hook{d}{n}}(x_1) \cdots f_{\1, \hook{d}{n}}(x_k).
\end{multline}
To finish the proof of Theorem~\ref{thm: many factors}, we must extract from this expression the coefficient of $\prod_i \basis{x_i}{p_i}$.  For convenience, we denote by $b_{p_1, \ldots, p_k}(q)$ the coefficient of $\prod_i \basis{x_i}{p_i}$ in the normalized generating function $F(\x)/|G|^{k - 1}$.

Because of the form \eqref{expressed in new basis} of the polynomial $f_{\1, \hook{d}{n}}(x)$, it is convenient to introduce a new parameter $j$, marking the number of indices $p_i$ not equal to $n$.  Up to permuting variables, we may assume without loss of generality that $p_i < n$ if $1 \leq i \leq j$ and $p_i = n$ if $j < i \leq k$.  Then substituting from \eqref{expressed in new basis} into \eqref{many factors simplified} and extracting coefficients yields
\begin{align}
b_{p_1, \ldots, p_k}(q)  & = \sum_{d = 0}^{\min(p_i)} (-1)^d
q^{\binom{d + 1}{2}} \qbin{n - 1}{d}{q} q^{-jd} \left(\frac{[n - d -
    1]!_q}{[n - 1]!_q}\right)^j \prod_{i=1}^j \frac{[p_i]!_q}{[p_i -
  d]!_q} \notag \\
& = \sum_{d=0}^{\min(p_i)}
 (-1)^d q^{\binom{d+1}{2}-jd} \qbin{n - 1}{d}{q} \prod_{i=1}^j \qbin{n - 1 - d}{p_i - d}{q} \Big\slash
{\prod_{i=1}^j \qbin{n-1}{p_i}{q}}, \label{interesting coefficient
  many factors}
\end{align}
as desired.  This completes the proof of Theorem~\ref{thm: many factors}.

\begin{remark}
When $k=2$ the expression in \eqref{interesting coefficient many factors} 
equals \eqref{interesting coefficient}. In the case of two factors we 
applied an additional $q$-identity to arrive at the expression in
Theorem~\ref{thm: two factors}. In the case of $k$ factors, using a
trick of Stanton (personal communication), one can split 
\eqref{interesting coefficient many factors} into several terms and 
apply similar identities (like \cite[(3.2.2)]{GasperRahman} for 
the case $k=3$). However, the resulting ``less alternating'' 
expressions have several terms.  Because of this, and like in the 
$\mathfrak{S}_n$ case \eqref{eq:Jackson's formula into many factors}, 
we opted to stop at \eqref{interesting coefficient many factors}.
\end{remark}

\subsection{ Additional remarks on Theorem~\ref{thm: many factors}}
The following remarks show how to recover the main theorems of \cite{LRS, HLR} as special cases of Theorem~\ref{thm: many factors}.  Incidentally, the first remark also settles a conjecture of Lewis--Reiner--Stanton.

\begin{remark}
\label{rmk: recovering LRS}
In \cite[Thm.~1.2]{LRS}, Lewis--Reiner--Stanton gave the following formula for the number $t_q(n, \ell) := a_{n - 1, \ldots, n - 1}(q)$ of factorizations of a Singer cycle $c$ as a product of $\ell$ reflections (that is, elements with fixed space dimension $n - 1$):
\[
t_q(n,\ell) = \frac{(-[n]_q)^{\ell}}{q^{\binom{n}{2}}(q;q)_n} \left(
  (-1)^{n-1}(q;q)_{n-1} + \sum_{k=0}^{n-1}
  (-1)^{k+n}q^{\binom{k+1}{2}} \qbin{n-1}{k}{q} (1+q^{n-k-1}-q^{n-k})^{\ell} \right).
\]
Here, we show how to derive this formula from Theorem~\ref{thm: many factors}.  It was conjectured \cite[Conj.~6.3]{LRS} that this formula should count factorizations of any regular elliptic element, not just a Singer cycle; since the derivation here is valid for all regular elliptic elements, it settles the conjecture.

Using square brackets to denote coefficient extraction, we have by definition that 
\begin{equation} \label{eq:refl2Fseries}
t_q(n,\ell) = [x_1^{n-1}\cdots
x_\ell^{n-1}]\,F(x_1,\ldots,x_{\ell}),
\end{equation} 
where $F(x_1,\ldots,x_\ell)$ is the generating function appearing in \eqref{eq:genseries-manyfactors}. 
By \eqref{q-binomial 1} we have
\[
[x^{n-1}] \frac{(x;q^{-1})_p}{(q;q)_p} 
= 
\begin{cases}
(-1)^{n-1} q^{-\binom{n - 1}{2}}/ (q;q)_{n-1} &\text{ if } p=n-1,\\
(-1)^{n-1} q^{-\binom{n}{2}}[n]_q\,/ (q;q)_n &\text{ if } p=n,\\
0 &\text{ otherwise}. 
\end{cases}
\]
Thus, carrying out the coefficient extraction \eqref{eq:refl2Fseries} from $F(x_1,\ldots,x_\ell)$ yields
\begin{align*}
t_q(n,\ell) &= |G|^{\ell-1} \sum_{i=0}^{\ell} \binom{\ell}{i}
(-1)^{(n-1)\ell} \left( \frac{1}{(q;q)_{n-1} q^{\binom{n-1}{2}}}\right)^i
\left( \frac{[n]_q}{(q;q)_{n} q^{\binom{n}{2}}}  \right)^{\ell-i} M^{n-1}_{(n-1)^i}(q) \\   
&= (-1)^{n} \frac{(-[n]_q)^{\ell}}{q^{\binom{n}{2}}(q;q)_n} \left(
  \sum_{i=0}^{\ell} \binom{\ell}{i} (1-q)^i q^{(n-1)i}  M^{n-1}_{(n-1)^i}(q)    \right).
\end{align*}
% Where we used $|G|=(-1)^n(q;q)_n q^{\binom{n}{2}}$. 
By the definition of $M^{m}_{\bf p}(q)$ 
we have $M^{n-1}_{\varnothing}=0$ and $M^{n-1}_{(n-1)^i}(q)  = \sum_{k=0}^{n-1} (-1)^k q^{\binom{k+1}{2}- ik}
{\footnotesize \qbin{n-1}{k}{q}}$ for $i>0$.
Thus
\begin{align*}
t_q(n,\ell) &= (-1)^{n} \frac{(-[n]_q)^{\ell}}{q^{\binom{n}{2}}(q;q)_n} \left(
  \sum_{i=1}^{\ell} \binom{\ell}{i} (1-q)^i q^{(n-1)i}
  \left(   \sum_{k=0}^{n-1} (-1)^k q^{\binom{k+1}{2}- ik}
\qbin{n-1}{k}{q} \right)    \right)\\
&= (-1)^{n} \frac{(-[n]_q)^{\ell}}{q^{\binom{n}{2}}(q;q)_n} \left(
\sum_{k=0}^{n-1} (-1)^k q^{\binom{k+1}{2}} \qbin{n-1}{k}{q} \left(
  \sum_{i=1}^{\ell} \binom{\ell}{i} q^{(n-k-1)i}(1-q)^i
\right) \right)\\
&=  \frac{(-[n]_q)^{\ell}}{q^{\binom{n}{2}}(q;q)_n} \left(
\sum_{k=0}^{n-1} (-1)^{k+n} q^{\binom{k+1}{2}} \qbin{n-1}{k}{q} \left((1+q^{n-k-1}-q^{n-k})^{\ell}-1\right)
\right),
\end{align*}
where the last step follows by an application of the binomial
theorem. Finally, by \eqref{q-binomial 1} we have
\begin{align*}
\sum_{k=0}^{n-1} (-1)^{k} q^{\binom{k+1}{2}} \qbin{n-1}{k}{q} = (q;q)_{n-1}
\end{align*}
and so
\begin{align*}
t_q(n,\ell) &= \frac{(-[n]_q)^{\ell}}{q^{\binom{n}{2}}(q;q)_n} \left(
\sum_{k=0}^{n-1} (-1)^{k+n} q^{\binom{k+1}{2}} \qbin{n-1}{k}{q} \left((1+q^{n-k-1}-q^{n-k})^{\ell}-1\right)
\right)\\
&= \frac{(-[n]_q)^{\ell}}{q^{\binom{n}{2}}(q;q)_n} \left(
(-1)^{n-1}(q;q)_{n-1} + \sum_{k=0}^{n-1} (-1)^{k+n} q^{\binom{k+1}{2}} \qbin{n-1}{k}{q} (1+q^{n-k-1}-q^{n-k})^{\ell}\right),
\end{align*}
as desired.
\end{remark}

\begin{remark}
\label{rmk: genus 0 many factors}
In the genus-$0$ case $r_1 + \ldots + r_k = (k - 1)n$, there is a simple formula \cite[Thm.~4.2]{HLR} 
\begin{equation}
\label{eq:genus 0}
a_{r_1, \ldots, r_k}(q) = q^{\sum_{i = 1}^k (n - r_i - 1)r_i} (q^n - 1)^{k - 1}
\end{equation}
for $a_{r_1, \ldots, r_k}(q)$ with $0 \leq r_i < n$.  Here, we show how to derive this formula from Theorem~\ref{thm: many factors}.

In the genus-$0$ case, it follows from the triangularity of the 
basis change \eqref{q-binomial 1} 
that $a_{r_1, \ldots, r_k}(q)$ is equal to a predictable factor times 
$b_{r_1, \ldots, r_k}(q)$.  Thus, we focus on computing this
latter coefficient.  Our approach is to evaluate a certain $q$-difference 
in two different ways: the first will give a formula involving
$M^{m}_{\bf r}(q)$, while the second will give an explicit product formula.

Let $\Delta_q$ be the operator that acts on a function $f$ by 
\[
\Delta_q(f)(x) := \frac{f(qx)-f(x)}{qx - x}
  =\frac{f(qx)-f(x)}{(q-1) x}.
\]  
Then it is not hard to show using the $q$-Pascal recurrence \cite[(1.67)]{ec1} that 
\begin{equation}
\label{iterated q difference}
 \Delta_q^{N}(f)(x) = q^{-\binom{N}{2}} (q - 1)^{-N} 
    \sum_{d= 0}^{N}(-1)^{d} q^{\binom{d}{2}} \qbin{N}{d}{q} \frac{ f(q^{N - d}x) }{x^N}.
\end{equation}
On the other hand, $\Delta_q$ acts in a predictable way on polynomials: if $f$ is a polynomial of degree $n$ and leading coefficient $a$ then $\Delta_q(f)$ is a polynomial of degree $n - 1$ and leading coefficient $[n]_q \cdot a$.  

If one defines for $0 \leq r \leq m$ the polynomial 
\[
P^m_{r}(x) := x \cdot \frac{(x q^{r + 1 - m}; q)_{m - r}}{(q; q)_{m - r}} 
= x \cdot \frac{(x; q^{-1})_{m - r}}{(q; q)_{m - r}}
= x \cdot(1 - x)(1 - xq^{-1}) \cdots (1 - xq^{m - r - 1})/(q; q)_{m - r}
\]
and for a tuple ${\bf r} = (r_1, \ldots, r_k)$
\[
\mathcal{P}^m_{\bf r}(x) := \frac{1}{x} \prod_{\substack{1 \leq i \leq k \colon \\ r_i \leq m}} P^m_{r_i}(x),
\]
then one has by Equation~\eqref{iterated q difference} that
\begin{align*}
 \Delta_q^{m}(\mathcal{P}^m_{{\bf r}})(x) & = q^{-\binom{m}{2}} (q - 1)^{-m} 
    \sum_{d= 0}^{m}(-1)^{d} q^{\binom{d}{2}} \qbin{m}{d}{q} \frac{ \mathcal{P}^m_{{\bf r}}(q^{m - d}x) }{x^m} \\
& = q^{-\binom{m}{2}} (q - 1)^{-m} 
    \sum_{d= 0}^{m}(-1)^{d} q^{\binom{d}{2}} \qbin{m}{d}{q} \frac{(q^{m - d}x)^{k - 1}}{x^m} \prod_{i = 1}^k \frac{(xq^{m - d}; q^{-1})_{m - r_i}}{(q; q)_{m - r_i}} \\
& = q^{-\binom{m}{2} + m(k - 1)} (q - 1)^{-m} 
    \sum_{d= 0}^{m}(-1)^{d} q^{\binom{d}{2} - d(k - 1)} \qbin{m}{d}{q} x^{k - m - 1} \prod_{i = 1}^k \frac{(xq^{m - d}; q^{-1})_{m - r_i}}{(q; q)_{m - r_i}}.
\end{align*}
Plugging in $x = 1$ gives 
\begin{align}
\Delta_q^{m}(\mathcal{P}^m_{{\bf r}})(1) & =
q^{-\binom{m}{2} + m(k - 1)} (q - 1)^{-m}
\sum_{d= 0}^{m}(-1)^{d} q^{\binom{d}{2} - d(k - 1)} \qbin{m}{d}{q} \prod_{i = 1}^k \frac{(q^{m - d}; q^{-1})_{m - r_i}}{(q; q)_{m - r_i}} \notag\\
& =
q^{-\binom{m}{2} + m(k - 1)} (q - 1)^{-m}
\sum_{d= 0}^{m}(-1)^{d} q^{\binom{d}{2} - d(k - 1)} \qbin{m}{d}{q} \prod_{i = 1}^k \qbin{m - d}{m - r_i}{q} \notag\\
\label{difference formula for M}
& =
q^{-\binom{m}{2} + m(k - 1)} (q - 1)^{-m} M^{m}_{\bf r}(q).
\end{align}

On the other hand, when $m = n - 1$ we have that $\mathcal{P}^m_{\bf r}$ is a polynomial of degree $(k - 1) + \sum_i (n - 1 - r_i) = kn - 1 - \sum_i r_i$.  Thus, in the genus-$0$ case $\sum_i r_i = (k - 1)n$, we have that $\deg \mathcal{P}^m_{\bf r} = n - 1$.  It follows immediately that $\Delta_q^{n - 1}(\mathcal{P}^{n - 1}_{\bf r})$ is equal to 
$[n - 1]!_q$ times the leading coefficient of $\mathcal{P}^{n - 1}_{\bf r}$.  It is easy to see from the definition that $\mathcal{P}^{n - 1}_{\bf r}$ has leading coefficient 
\[
\prod_{r \in {\bf r} \colon r < n } \frac{(-1)^{m - r} q^{-\binom{m - r}{2}}}{(q; q)_{n - 1 - r}}.
\]
Combining this with \eqref{difference formula for M} gives a simple 
product formula\footnote{
More generally, one could obtain by the same techniques a formula for $b_{r_1, \ldots, r_k}(q)$ in the genus $g := (k - 1)n - \sum_i r_i$ case as a sum of $g + 1$ terms, depending on the top $g + 1$ coefficients of $\mathcal{P}$.
}
 for $b_{r_1, \ldots, r_k}(q)$.  Putting everything together gives~\eqref{eq:genus 0}, as desired.
\end{remark}

\begin{remark}
We again suppose that exactly $j$ of the $p_i$ are less than $n$, and without loss of generality take them to be $p_1, \ldots, p_j$.
Applying \eqref{falling factorial to pochhammer},
one may rewrite 
\[
b_{p_1, \ldots, p_k}(q) = b_{p_1, \ldots, p_j}(q)
\] 
as a $q$-hypergeometric function of a particularly simple form:
\[
b_{p_1, \ldots, p_k}(q)
= 
{_j\phi_{j - 1}}(\quad q^{- p_1}, \ldots, q^{-p_j} \quad ; \quad q^{1 - n}, \ldots, q^{1 - n} \quad ; \quad q^{n(1 - j) + \sum p_i}  )
.
\]
Then the calculation in Remark~\ref{rmk: genus 0 many factors} in the genus-$0$ case is equivalent to a $q$-analogue of the Karlsson--Minton formulas; see \cite[\S1.9]{GasperRahman}.
\end{remark}

\section{An application to aysmptotic enumeration of factorizations by genus}
\label{section: asymptotics}
In the spirit of (e.g.)\ \cite{FulmanStanton} 
and \cite[\S4.2]{GoupilSchaeffer}, one may ask to study 
the \emph{asymptotic} enumeration of factorizations.  Here, 
we compute the asymptotic growth of the number of fixed-genus 
factorizations of a regular elliptic element in $G=\GL_n(\Fq)$ 
as $n \to \infty$.
\begin{theorem} \label{thm:growth_rate}
Let $g \geq 0$ and $q$ be fixed.  As $n \to \infty$, the number of genus-$g$ factorizations of a regular elliptic element in $\GL_n(\F_q)$ into two factors has growth rate
\[
\Theta\left(\frac{q^{(n + g)^2/2}}{|\GL_g(\Fq)|} \right),
\]
where the implicit constants depend on $q$ but not $g$.
\end{theorem}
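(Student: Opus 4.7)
The plan is to apply Theorem~\ref{thm: two factors} to estimate each term $a_{r, s}(q)$ with $r + s = n - g$ and then sum. Since every $a_{r, s}(q)$ is a nonnegative count, the upper bound reduces to term-by-term estimates, and the lower bound follows from any single term of the claimed order of magnitude.

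For $r, s \geq 1$ with $r + s = n - g$, extracting the coefficient $[x^r y^s]$ from the right side of \eqref{eq: two factor theorem} leaves only terms $b_{t, u}(q) \basis{x}{t} \basis{y}{u}$ with $t \geq r$, $u \geq s$.  Writing $t = r + a$, $u = s + b$ with $a, b \geq 0$, $a + b \leq g$, and applying the $q$-binomial identity $[x^r] \basis{x}{r + a} = (-1)^r q^{-\binom{r}{2} - a r} / ((q; q)_r (q; q)_a)$, one finds
\[
\frac{a_{r, s}(q)}{|G|} = \sum_{\substack{a, b \geq 0 \\ a + b \leq g}} b_{r + a, s + b}(q) \cdot \frac{(-1)^{n - g} \, q^{-\binom{r}{2} - \binom{s}{2} - a r - b s}}{(q; q)_r (q; q)_s (q; q)_a (q; q)_b}.
\]
Using the asymptotic $(q; q)_m \sim (-1)^m q^{\binom{m + 1}{2}} (q^{-1}; q^{-1})_\infty$ as $m \to \infty$, each summand has absolute value $\Theta(q^{-r^2 - s^2 - a r - b s})$, so for $r, s = \Theta(n)$ the $(a, b) = (0, 0)$ term dominates by a factor $q^{\Theta(n)}$. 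A direct analysis of the formula for $b_{t, u}(q)$ in \eqref{eq: two factor theorem} (using $[m]!_q \sim (q^{-1}; q^{-1})_\infty \cdot q^{\binom{m}{2}}/(1 - q^{-1})^m$) shows that $b_{r, s}(q) > 0$ for $r, s \geq 1$ and is bounded between positive constants depending only on $q$ in the regime $r, s = \Theta(n)$, $r + s = n - g$. After a careful sign analysis, this yields $a_{r, s}(q)/|G| = \Theta(q^{-r^2 - s^2})$ for such $(r, s)$; the boundary cases $r = 0$ or $s = 0$ contribute only $O(q^{-(n - g)^2})$, negligible compared to the central terms of order $q^{-(n - g)^2/2}$.

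Parameterizing $r = (n - g)/2 + x$, $s = (n - g)/2 - x$, we have $r^2 + s^2 = (n - g)^2/2 + 2 x^2$, so
\[
\frac{N_g(n)}{|G|} = \Theta\!\left(q^{-(n - g)^2/2} \sum_{x \in \ZZ} q^{-2 x^2}\right) = \Theta\!\left(q^{-(n - g)^2/2}\right),
\]
since the Gaussian-type sum converges to a positive constant depending only on $q$. Multiplying by $|G| = \Theta(q^{n^2})$ gives $N_g(n) = \Theta(q^{(n + g)^2/2 - g^2})$; and since $|\GL_g(\F_q)| = q^{g^2} (q^{-1}; q^{-1})_g$ with $(q^{-1}; q^{-1})_g \in [(q^{-1}; q^{-1})_\infty, 1]$ bounded uniformly in $g$, this is equivalent to $\Theta(q^{(n + g)^2/2}/|\GL_g(\F_q)|)$ with implicit constants depending only on $q$, as claimed.

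The main obstacle is the uniform-in-$g$ two-sided bound $b_{r, s}(q) = \Theta(1)$: each of the four factors in the formula for $b_{r, s}(q)$ has a top-degree exponent that depends on $r, s, g, n$, and one must verify that these cancel to give a bounded quantity whose leading coefficient is bounded below by a positive constant depending only on $q$ (not on $g$).
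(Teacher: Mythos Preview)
Your approach is essentially the paper's own, with only cosmetic reorganization. The paper's Proposition~\ref{lem: explicit formula for a} carries out exactly your coefficient extraction (its $(t',u')$ are your $(a,b)$), but then bundles the resulting finite sum into an explicit Laurent polynomial $P_g(x,y,z,q)$; Proposition~\ref{lem: maximum weight monomial} shows $P_g$ has a unique monomial $xy^{g-1}z^{g-1}$ of maximal weight, arising precisely from the $t'=u'=0$ term --- this is your observation that the $(a,b)=(0,0)$ summand dominates. Proposition~\ref{lem: asymptotics for a sum} is your Gaussian-type sum estimate. What you gain by skipping $P_g$ is that you never have to track its monomials individually; what the paper gains is that the dominance claim becomes a clean combinatorial statement about a fixed polynomial rather than an analytic estimate.

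Your flagged ``main obstacle'' --- the uniform-in-$g$ bound $b_{r,s}(q)=\Theta(1)$ --- is real but not hard, and you should simply carry it out rather than leave it hanging. Substituting $n-t-1=s+g-1$, $n-u-1=r+g-1$, $n-t-u=g$ into the coefficient from Theorem~\ref{thm: two factors} and using $[m]!_q=q^{\binom{m+1}{2}}(q-1)^{-m}(q^{-1};q^{-1})_m$, all powers of $q$ and of $q-1$ cancel exactly, leaving
\[
b_{r,s}(q)=\frac{(q^{-1};q^{-1})_{r+g-1}\,(q^{-1};q^{-1})_{s+g-1}}{(q^{-1};q^{-1})_{g}\,(q^{-1};q^{-1})_{n-1}}\cdot\bigl(1-q^{-(r+g)}-q^{-(s+g)}+q^{-n}\bigr).
\]
Each Pochhammer lies in $[(q^{-1};q^{-1})_\infty,1]$, and the last factor equals $(1-q^{-(r+g)})(1-q^{-(s+g)})+q^{-n}(1-q^{-g})\in[(1-q^{-1})^2,1]$ for $r,s\geq 1$, so $b_{r,s}(q)$ is trapped between positive constants depending only on $q$. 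One further small gap: you assert $a_{r,s}(q)/|G|=\Theta(q^{-r^2-s^2})$ only for $r,s=\Theta(n)$, but the upper bound on $N_g(n)$ needs all $(r,s)$; this is fixed by bounding the $(a,b)$-sum in absolute value and using $1/|(q;q)_a|\leq q^{-\binom{a+1}{2}}/(q^{-1};q^{-1})_\infty$ to get a convergent majorant independent of $g$.
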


\begin{remark}
\label{rmk: S_n asymptotics}
For the sake of comparison, we include the analogous asymptotics
in $\SS_n$. Goupil and Schaeffer showed 
\cite[Cor.~4.3]{GoupilSchaeffer} that for fixed $g\geq 0$, as $n\to \infty$
the number of genus-$g$ factorizations
of a fixed long cycle in $\SS_n$ into two factors is asymptotic to
\[
\frac{n^{3(g-\frac{1}{2})}4^n}{ g! 48^g\sqrt{\pi}}.
\]
It is interesting to observe that in both results we see that the constant depends on the size ($|\SS_g|$ or $|\GL_g(\Fq)|$) of a related group.  Is there an explanation for this phenomenon?
\end{remark}

The rest of this section is a sketch of the proof of Theorem~\ref{thm:growth_rate}.  To begin, we use Theorem~\ref{thm: two factors} to give an explicit formula for $a_{r, s}(q)$.  For every positive integer $g$, let $P_g(x, y, z, q)$ be the following Laurent polynomial of four variables:
\begin{multline}
\label{eq: polynomial P}
P_g(x, y, z, q) := 
(-1)^g q^{-g} \left( y^{-g} z^{g} \prod_{i = 1}^{g} (yq^{i} - 1) + 
y^{g}z^{-g} \prod_{i = 1}^{g}(zq^{i} - 1) \right) + 
\sum_{\substack{ 0 \leq t', u' \leq g - 1 \\ 0 \leq t' + u' \leq g}} (-1)^{t' + u'} \times {} \\
\qbin{g}{t'; u'; g - t' - u'}{q} \cdot 
y^{u'-t'}z^{t' - u'}
q^{t'u' - t' - u'}
(x-yq^{t'}-zq^{u'}+1) 
\prod_{i = 1}^{g - t' - 1} (zq^{i} - 1)  
\prod_{i = 1}^{g - u' - 1} (yq^{i} - 1).
\end{multline}
\begin{proposition}
\label{lem: explicit formula for a}
If $g, r, s > 0$ satisfy $r + s = n - g$ then 
\[
a_{r, s}(q) =  \frac{q^{2rs + (g-1)n - \binom{g}{2}}(q^n-1)}{(q - 1)^g [g]!_q} \cdot P_g(q^n, q^r, q^s, q).
\]
\end{proposition}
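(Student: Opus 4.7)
The plan is to prove the formula by direct coefficient extraction from the generating function in Theorem~\ref{thm: two factors} and then to massage the result into the claimed product form. First, since $r, s > 0$, the two ``univariate'' summands $\frac{(x;q^{-1})_n}{(q;q)_n}$ and $\frac{(y;q^{-1})_n}{(q;q)_n}$ in that generating function contribute nothing to $[x^r y^s] F(x,y)$, so only the double sum is relevant. Applying the $q$-binomial theorem \eqref{q-binomial 1} to extract the coefficient of $x^r$ from $\frac{(x;q^{-1})_t}{(q;q)_t}$ and of $y^s$ from $\frac{(y;q^{-1})_u}{(q;q)_u}$ yields an explicit double sum for $a_{r,s}(q)/|G|$ ranging over $(t,u)$ with $r \leq t \leq n-1$, $s \leq u \leq n-1$, and $t+u \leq n$.

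Next I would perform the substitution $t = r + t'$, $u = s + u'$, transforming the summation into one over the simplex $\{(t', u') \colon t', u' \geq 0, \; t' + u' \leq g\}$, the upper bounds $t' \leq n-1-r$ and $u' \leq n-1-s$ being automatic given $r, s \geq 1$ and $t' + u' \leq g$. I would then split this simplex into three pieces: the two ``corners'' $(t', u') = (g, 0)$ and $(t', u') = (0, g)$ on the hypotenuse $t' + u' = g$, and the interior lattice points with $0 \leq t', u' \leq g - 1$. The two corners correspond exactly to the case $t + u = n$, in which $[n - t - u]!_q = 1$; these two contributions will become the first two (``boundary'') summands of $P_g$ after the substitution $x = q^n,\ y = q^r,\ z = q^s$. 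The interior lattice points form the third sum in $P_g$, whose summation index runs over $0 \leq t', u' \leq g - 1$ with $t' + u' \leq g$.

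The main obstacle is the final simplification: matching the common prefactor $\frac{q^{2rs + (g-1)n - \binom{g}{2}}(q^n-1)}{(q-1)^g [g]!_q}$ together with the per-summand factor in each term requires careful $q$-factorial bookkeeping. Concretely, one uses \eqref{falling factorial to pochhammer} to convert ratios of the form $[n - t - 1]!_q / [s - 1]!_q$ into the products $\prod_{i=1}^{g-t'-1}(q^{s+i} - 1)$ that appear in $P_g$, and one must recognize the $q$-multinomial coefficient $\qbin{g}{t'; u'; g - t' - u'}{q}$ emerging from the combination of $\qbin{t}{r}{q}$, $\qbin{u}{s}{q}$, and the factor $\frac{[n - t - 1]!_q [n - u - 1]!_q}{[n - 1]!_q [n - t - u]!_q}$. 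The cleanest way to organize this is to track powers of $q$ separately from $q$-factorial and sign data, collecting the contributions from $q^{tu - t - u}$, $q^{\binom{t-r}{2}}$, $q^{-\binom{t}{2}}$, and their $u$-counterparts after the substitution, and then verifying at the end that the leftover powers of $q$ agree with the exponent $2rs + (g-1)n - \binom{g}{2}$ in the claimed prefactor (together with the explicit $q^{t'u' - t' - u'}$ and $y^{u'-t'}z^{t'-u'}$ factors appearing inside $P_g$).
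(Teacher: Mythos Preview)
Your proposal is correct and follows essentially the same route as the paper: extract $[x^r y^s]$ from the double sum in Theorem~\ref{thm: two factors} via \eqref{q-binomial 1}, substitute $t = r + t'$, $u = s + u'$ to reduce to the simplex $t' + u' \leq g$, split off the two corners $(t',u') = (g,0)$ and $(0,g)$, and then do the $q$-factorial bookkeeping to match $P_g$. The paper's own account is terser (``a long but totally unenlightening calculation \ldots\ no nontrivial $q$-identities are required''), but your outline fills in precisely the steps it indicates.
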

\begin{proof}[Proof sketch.]
Extracting the coefficient of $x^ry^s$ from \eqref{eq: two factor theorem} using \eqref{q-binomial 1} gives
\begin{multline}
\label{eq:qGS} \frac{a_{r,s}(q)}{|G|} = 
(-1)^{n - g} q^{\binom{r+1}{2}+\binom{s+1}{2}} \times {} \\
\sum_{\substack{r \leq t,  \; s \leq u, \\ t + u \leq n}}
\frac{1}{(q;q)_t(q;q)_u} \qbin{t}{r}{q}\qbin{u}{s}{q} \cdot q^{tu-t-u -rt-su}
\frac{[n-t-1]!_q \cdot [n-u-1]!_q}{[n-1]!_q \cdot [n-t-u]!_q} \frac{(q^n-q^t-q^u+1)}{(q-1)}. 
\end{multline}
The rest of the proof is a long but totally unenlightening
calculation: expanding the $q$-binomials, making the change of
variables $t = r + t'$, $u = s + u'$ with $0 \leq t' + u' \leq g$,
separating the $(t', u') = (g, 0)$ and $(0, g)$ terms, rearranging
various factors, and doing some basic arithmetic.  In particular, no
nontrivial $q$-identities are required.
\end{proof}
\begin{proposition}
\label{lem: asymptotics for a sum}
For constants $a, b, c, g$ and $q > 1$, we have
\[
\sum_{\substack{r, s \geq 1 \\ r + s = n - g}} q^{2rs + an + br + cs} 
= 
\Theta\left( q^{(b - c)^2/8 + g(g-b-c)/2} \cdot q^{n^2/2} \cdot q^{(2a + b + c - 2g)n/2} \right)
\]
as $n \to \infty$, where the implicit constants depend only on $q$ and $b - c$.
\end{proposition}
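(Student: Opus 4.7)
The plan is to recognize the exponent as a quadratic in $r$ and estimate the sum by its maximum term, i.e., a discrete Laplace-style estimate. After substituting $s = n - g - r$, the exponent becomes
\[
f(r) := 2rs + an + br + cs = -2r^2 + \bigl(2(n-g) + b - c\bigr)\, r + (a+c)n - cg,
\]
a downward-opening quadratic in $r$ with real maximum at $r^* = (n-g)/2 + (b-c)/4$. A direct completion-of-squares computation (using $-2(r^*)^2 + Lr^* = L^2/8$ with $L = 2(n-g) + b - c$) gives
\[
f(r^*) = \frac{n^2}{2} + \frac{(b-c)^2}{8} + \frac{g(g-b-c)}{2} + \frac{(2a+b+c-2g)n}{2},
\]
which is precisely the exponent on the right-hand side of the proposition. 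Moreover, since $f$ is quadratic with leading coefficient $-2$, one has the exact identity $f(r) = f(r^*) - 2(r - r^*)^2$ for every real $r$, so $q^{f(r)} = q^{f(r^*)} \cdot q^{-2(r-r^*)^2}$.

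Next I would bound the truncated sum by comparing it with the Gaussian-like series over all integers. For the lower bound, choose $r$ to be the nearest integer to $r^*$; then $|r - r^*| \leq 1/2$, so the corresponding summand alone contributes at least $q^{f(r^*)-1/2}$. For $n$ sufficiently large (depending on $g, b, c$), this $r$ lies in the admissible range $[1, n - g - 1]$, so the full sum dominates this single term. For the upper bound, extend the sum to all integers $r$, obtaining
\[
\sum_{r \in \mathbb{Z}} q^{f(r)} = q^{f(r^*)} \sum_{k \in \mathbb{Z}} q^{-2(k - \delta)^2},
\]
where $\delta \in [0,1)$ denotes the fractional part of $r^*$. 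The inner sum is a convergent theta-type series bounded uniformly in $\delta$ by a constant depending only on $q$.

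Combining the two bounds yields the sum as $\Theta\bigl(q^{f(r^*)}\bigr)$, with implicit constants depending only on $q$ (and in particular not on $a$ or $g$), which is exactly the claim. I do not foresee any real obstacle here: the whole argument is a one-variable Laplace-type estimate for a quadratic exponent, with no genuine $q$-series identity or delicate cancellation required. The only mild care points are verifying algebraically that the completed-square value matches the target exponent, and checking that the nearest integer to $r^*$ lies in the summation range once $n$ exceeds a threshold depending on $g$ and $b-c$; both are routine.
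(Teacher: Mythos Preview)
Your proposal is correct and follows essentially the same approach as the paper: both complete the square in the exponent (the paper writes the residual sum as $\sum_{r} q^{-\frac{1}{2}((m-2r)+(b-c)/2)^2}$, which is exactly your $\sum_r q^{-2(r-r^*)^2}$) and then observe that this residual is $\Theta(1)$ via comparison with a convergent Jacobi-theta-type series. Your explicit single-term lower bound and all-integer upper bound are a slightly more spelled-out version of the paper's remark that the tail ``oscillates with the parity of $m$ between two evaluations of the convergent Jacobi theta function''; in particular your bounds show the implicit constants can be taken to depend on $q$ alone, which is marginally sharper than the stated dependence on $q$ and $b-c$.
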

\begin{proof}
In the sum on the left side,
% \[
% S := \sum_{\substack{r, s \geq 1 \\ r + s = n - g}} q^{2rs + an + br + cs},
% \]
set $m := n - g$ and rearrange to get 
\begin{align*}
%S 
%& = q^{an + cm}\sum_{r = 1}^{m - 1} q^{2r(m - r) + (b - c)r} \\
%& = q^{an + cm}\sum_{r = 1}^{m - 1} q^{-\frac{1}{2}(4r^2 - 4r(m + (b - c)/2))} \\
\sum_{\substack{r, s \geq 1 \\ r + s = n - g}} q^{2rs + an + br + cs} 
& = q^{an + cm + (m + (b - c)/2)^2/2}\sum_{r = 1}^{m - 1} q^{-\frac{1}{2}((m - 2r) + (b - c)/2)^2}.
\end{align*}
The exponent $an + cm + (m + (b - c)/2)^2/2$ is equal to
$\frac{n^2}{2} + (2a + b + c - 2g)n/2 + \frac{g(g - b - c)}{2} +
\frac{(b - c)^2}{8}$.  For $m$ large, the sum on the right is easily
seen to be $\Theta(1)$: it oscillates with the parity of $m$ between two evaluations of the convergent
Jacobi theta function $\vartheta(w,t):= \sum_{r = -\infty}^\infty t^{r^2} \cdot w^{2r}$, depending on the value $b - c$.  This completes
the proof.
% When $m = 2k$ is even, we may reindex the sum as
% \[
% \sum_{r = 1}^{m - 1} q^{-\frac{1}{2}((m - 2r) + (b - c)/2)^2} = \sum_{r = -k + 1}^{k - 1}q^{-\frac{1}{2}(2r + (b - c)/2)^2},
% \]
% and as $n \to \infty$ (equivalently, as $m \to \infty$) this approaches the value $q^{-(b - c)^2/8} \vartheta(q^{(b - c)/2}, q^{-2})$, where $\vartheta(w, q)$ denotes the theta function $\vartheta(w, q) := \sum_{r = -\infty}^\infty q^{r^2} \cdot w^{2r}$.  On the other hand, when $m = 2k + 1$ is odd, the sum is
% \[
% \sum_{r = 1}^{m - 1} q^{-\frac{1}{2}((m - 2r) + (b - c)/2)^2} = \sum_{r = -k}^{k - 1}q^{-\frac{1}{2}(2r + (b - c + 2)/2)^2},
% \]
% and this approaches $q^{-(b - c + 2)^2/8} \vartheta(q^{(b - c + 2)/2}, q^{-2})$.  Thus, the sum is $\Theta(1)$ as $n \to \infty$, regardless of parity.
\end{proof}

It follows from the preceding proposition that for any fixed $g$ and any Laurent polynomial $Q(x, y, z)$ with coefficients in $\mathbb{Q}(q)$, the asymptotic growth of $\sum_{r = 1}^{n - g - 1} q^{2r(n - g- r)} Q(q^n, q^r, q^{n - g - r})$ as $n \to \infty$ is determined entirely by the monomials in $Q$ of maximal \defn{weight}, where the weight of $x^a y^b z^c$ is defined to be $2a + b + c$.  This inspires the following result.
\begin{proposition}
\label{lem: maximum weight monomial}
Let $g > 0$ and $P_g(x, y, z, q)$ be as in \eqref{eq: polynomial P}%, and let the weight of an $(x, y, z)$-monomial be defined as in the preceding paragraph
.  Then $P_g$ has a unique $(x, y, z)$-monomial $x^{1} y^{g - 1} z^{g - 1}$ of maximal weight.
\end{proposition}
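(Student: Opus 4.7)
The proof plan is a direct weight analysis of the polynomial $P_g(x,y,z,q)$, term by term, observing that the weight of $x^ay^bz^c$ is $2a+b+c$.

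First I would dispose of the two ``boundary'' terms $(-1)^g q^{-g} y^{-g} z^g \prod_{i=1}^g(yq^i-1)$ and $(-1)^g q^{-g} y^g z^{-g} \prod_{i=1}^g(zq^i-1)$. Expanding the products, the monomials appearing in the first are of the form $y^{-g+k}z^g$ for $0\le k\le g$, of weight $k \le g$; the second is symmetric and also has weight at most $g$. Since $g < 2g$ for $g>0$, these terms cannot contribute a monomial of weight $\ge 2g$.

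Next I would treat the summand indexed by $(t',u')$, noting that the prefactor $y^{u'-t'}z^{t'-u'}$ has weight $0$ and so only shifts exponents. In the remaining product $(x - yq^{t'} - zq^{u'} + 1)\prod_{i=1}^{g-t'-1}(zq^i-1)\prod_{i=1}^{g-u'-1}(yq^i-1)$, the two products contribute monomials of weight at most $(g-t'-1)+(g-u'-1)$, while the four terms of the linear factor $(x,-yq^{t'},-zq^{u'},1)$ contribute weight $2,1,1,0$ respectively. So the total weight of a monomial from summand $(t',u')$ is at most $2g-t'-u'$, attained only when one picks $x$ from the linear factor and the top $y$- and $z$-degree terms from both products. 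In particular, for $(t',u')\ne(0,0)$ the weight is at most $2g-1$.

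The remaining step is to examine the summand $(t',u')=(0,0)$ carefully. Here the four subterms of $(x-y-z+1)\prod_{i=1}^{g-1}(zq^i-1)\prod_{i=1}^{g-1}(yq^i-1)$ produce monomials of maximal possible weights $2+(g-1)+(g-1)=2g$, $1+(g-1)+(g-1)=2g-1$, $2g-1$, and $2g-2$ respectively. Only the $x$-subterm reaches weight $2g$, and within it the unique monomial of maximal weight is $x\cdot y^{g-1}\cdot z^{g-1}$, obtained by selecting the leading term from each factor in each product.

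I do not expect any real obstacle: the argument is purely a bookkeeping exercise in tracking degrees, and no $q$-identities or cancellations between summands need to be invoked, since the different summands are separated by strictly decreasing weight bounds. The only mild subtlety is remembering that the shift $y^{u'-t'}z^{t'-u'}$ has weight zero, so changes in exponents from this factor do not affect the weight comparison.
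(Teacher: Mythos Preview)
Your proof is correct and follows essentially the same approach as the paper: analyze the boundary terms and the $(t',u')$-summands separately, bound the maximal weight contributed by each, and observe that only the $(t',u')=(0,0)$ summand attains weight $2g$, via the unique monomial $xy^{g-1}z^{g-1}$. Your bookkeeping is in fact slightly cleaner than the paper's, since your computed weights are consistent with the stated definition $\wt(x^ay^bz^c)=2a+b+c$ throughout.
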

\begin{proof}
By inspection, the first two summands in $P_g$ contribute maximum-weight monomials $y^{-g} \cdot z^{g} \cdot y^{g} = x^0y^{0}z^{g}$ and $y^{g} \cdot z^{-g} \cdot z^{g} = x^0y^{g}z^{0}$, both of weight $g/2$.  In the summation, the $t', u'$-summand contributes a unique maximum-weight monomial $z^{g - t' - 1} \cdot y^{g - u' - 1} \cdot y^{u' - t'} \cdot z^{t' - u'} \cdot x = x^1 \cdot y^{g - t' - 1} \cdot z^{g - u' - 1}$, of weight $(2g - t' - u' - 2)/2 + 1$.  This expression is uniquely maximized over the region of summation at $t' = u' = 0$, with value $g$.  This is also strictly larger than the maximum-weight contribution from the first two summands.  Thus, this maximum-weight monomial in the $t' = u' = 0$ summand is the unique maximum-weight monomial in $P_g$, as desired.
\end{proof}

Finally, we put the preceding results together to get asymptotics for factorizations of fixed genus.
\begin{proof}[Proof sketch of Theorem~\ref{thm:growth_rate}.]
The case $g = 0$ is straightforward from the formula~\eqref{eq:genus 0} already available in~\cite{HLR}.  For $g > 0$,  Proposition~\ref{lem: explicit formula for a} provides an explicit polynomial formula for $a_{r, s}(q)$.
Notably, the number of terms in this formula does not depend on $n$.  
 Proposition~\ref{lem: asymptotics for a sum} provides the asymptotics for the sum (over $r, s$ such that $r + s = n - g$) of a single monomial from this polynomial.  
Finally, Proposition~\ref{lem: maximum weight monomial} detects the unique monomial whose asymptotic growth dominates the others.  To finish, one needs to extract the coefficient of the $xy^{g - 1}z^{g - 1}$ monomial in $P_g$ and do the arithmetic.\footnote{We have swept under the rug the issue of the contribution from the terms $a_{0, n - g}(q) = a_{n - g, 0}(q)$.  Using the same approach as in Proposition~\ref{lem: explicit formula for a}, it is another long-but-not-difficult computation to show that this term is low order and so does not contribute to the asymptotic growth rate.}
\end{proof}

\section{Closing remarks}
\label{section: conclusion}

\subsection{Combinatorial proofs} \label{sec:combproofs}
Theorems~\ref{q=1 two factors} and~\ref{q=1 many factors} were
originally proved in \cite{Jackson2} by character methods (as in the outline in Section~\ref{character approach in S_n}), but these are not the only known proofs.  The first result (the case of two factors in $\SS_n$) has several combinatorial proofs,
by Schaeffer--Vassilieva~\cite{SchaefferVassilieva}, 
Bernardi~\cite{Bernardi}, 
and Chapuy--F\'eray--Fusy~\cite{ChapuyFerayFusy}.  
The second result (the case of $k$ factors) has an intricate
combinatorial proof \cite{BernardiM,BernardiM3}.  It would be
of interest to find combinatorial proofs of our $q$-analogous Theorems~\ref{thm: two factors} and~\ref{thm: many factors}

Most of the combinatorial proofs in $\SS_n$ for two and more factors include (following \cite{Bernardi}) the following elements:
\begin{compactenum}[ 1.]
\item For a permutation $\sigma$ in $\mathfrak{S}_n$, denote by
  $\cycles(\sigma)$ the set of cycles of the permutation. When $x,y$ 
  are both positive integers, the term $a_{r,s}x^ry^s$
  counts triples consisting of a factorization $(1,2\ldots,n)=u\cdot v$ and 
  {\em cycle colorings}: functions $\cycles(u)\to \{1,\ldots, x\}$ and
  $\cycles(v) \to \{1,\ldots,y\}$.  
\item When we do the change of bases  $x^k = \sum_{j=0}^k S(k,j)
  (x)_j$ implicit in \eqref{eq:Sn-twofactors-cycles}, where $S(k,j)$ is a
  Stirling number of the first kind and $(x)_j$ is a falling
  factorial, the term
%$n! \cdot b_{r,s} (x)_r(y)_s$ counts triples
 % consisting of factorization $(1,2,\ldots,n)=u\cdot v$, a surjective function
 % $A:\cycles(u) \to \{1,\ldots,x\}$, and a surjective function $B:\cycles(v)\to
 % \{1,\ldots,y\}$ such that $|\Ima(A)|=r$ and
 % $|\Ima(B)|=s$. Equivalently, 
$n!\cdot b_{r,s}$ counts tuples $(u,v,A',B')$ consisting of
\begin{compactitem}
\item  a factorization $(1,2,\ldots,n) = u\cdot v$, 
\item  a surjective function $A': \cycles(u)
  \to \{1,\ldots,r\}$, and
\item a surjective function $B': \cycles(v) \to \{1,\ldots,s\}$,
\end{compactitem}
Call such tuples \defn{colored factorizations}.\footnote{The
  first explicit appearance of cycle coloring of factorizations that we are
  aware of is in %Goulden--Jackson 
  \cite[Thm.~2.1]{GouldenJacksonRM}.}
\item Using the classical correspondence (e.g., see
  \cite[Ch.~1]{LandoZvonkin}) between factorizations of a long cycle and {\em unicellular bipartite maps}, certain
  bipartite graphs with a one-cell embedding on a locally orientable
  surface. Under this correspondence, the cycles of $u$ and $v$
  correspond to vertices  %of the parts of the bipartition, 
  and colored factorizations
  correspond to unicellular bipartite maps with a coloring on the
  vertices of each part.
\item Doing a bijective construction on the colored bipartite map. (See
  \cite[\S 3]{Bernardi} for the bijective construction proving \eqref{eq:Sn-twofactors-cycles}.)
\end{compactenum} 

The next remark gives a combinatorial interpretation of the coefficients
$|G|\cdot b_{t, u}(q)$ that appear in Theorem~\ref{thm: two factors},
analogous to the first two steps above. However, we do not know if there is a map
interpretation of the factorizations in $\GL_n(\Fq)$ that we consider.

\begin{remark}
The second change of basis formula \eqref{q-binomial 2} may be understood combinatorially in the following way: $x^k$ is the number of linear maps from a $k$-dimensional vector space $V$ to an $x$-element vector space $X$ over $\Fq$.  The term $\basis{x}{m}$ gives the number of $m$-dimensional subspaces of $X$, while $(-1)^m q^{\binom{m}{2}} (q^k; q^{-1})_m = (q^k - 1)(q^k - q)\cdots(q^k - q^{m - 1})$ is the number of surjective maps from $V$ to the selected $m$-dimensional subspace of $X$.  Thus, the right side refines the left side by dimension of the image.

For an element $u$ of $\GL_n(\Fq) \cong \GL(V)$, 
denote by $V^u$ the fixed space $\ker(u - 1)$ of $u$.
When $x, y$ are both powers of $q$, the term $a_{r, s}(q) x^r y^s$ counts
triples consisting of a factorization $c = u \cdot v$ with 
$\dim V^u = r$ and $\dim V^v = s$, a linear map $V^u \to X$, 
and a linear map $V^v \to Y$.  When we change bases, 
the term  $|G|\cdot b_{r,s}(q) \basis{x}{r} \basis{y}{s}$ counts 
triples consisting of a factorization $c = u \cdot v$, 
a linear map $A:V^u\to X$, and a linear map
$B:V^v\to Y$ such that $\dim\Ima(A)=r$ and $\dim\Ima(B)=s$.
Equivalently, $|G|\cdot b_{r, s}(q)$ counts
tuples $(u,v,A',B')$ consisting of
\begin{compactitem}
\item a factorization $c=u \cdot v$,
\item a surjective linear map $A' \colon V^u\to \Fq^r$, and
\item a surjective linear map $B' \colon V^v\to \Fq^s$.
\end{compactitem}
\end{remark}

\begin{remark}
One might hope to exploit special properties of the coefficients $M$
that appear in \eqref{eq:defM}, \eqref{eq:def-qM} to find
combinatorial connections.  For example, by inclusion-exclusion one
has that $M^{m}_{r_1,\ldots,r_k}$ counts tuples $(T_1,T_2,\ldots,T_k)$
of subsets $T_i \subseteq [m]$ such that $T_1 \cap T_2 \cap \cdots
\cap T_k = \varnothing$ and  $|T_i|=r_i$.  The obvious analogy is to
count tuples $(W_1, \ldots, W_k)$ of subspaces $W_i \subseteq \F_q^m$
such that $W_1 \cap \cdots \cap W_k = \{ 0 \}$ and $\dim W_i = r_i$.  The number of such tuples of subspaces may be computed by M\"obius inversion, but unfortunately it is
\[
\sum_{a = 0}^{\min(r_i)} (-1)^a q^{\binom{a}{2}}\qbin{m}{a}{q} \prod_{i = 1}^k \qbin{m - a}{r_i - a}{q},
\]
which is off by a power of $q$ in the summand from $M^m_{r_1, \ldots, r_k}(q)$.  Thus, we currently lack a more concrete connection than the obvious $\lim_{q \to 1}M^m_{r_1, \ldots, r_k}(q) = M^m_{r_1, \ldots, r_k}$.
\end{remark}

\begin{remark}
Note that the derivation in Remark~\ref{rmk: genus 0 many factors} is
very involved, while the corresponding derivation in the symmetric group
using the combinatorial interpretation of $M^n_{r_1,\ldots,r_k}$ is 
straightforward. One expected benefit of a combinatorial
interpretation of $M^n_{r_1,\ldots,r_k}(q)$ would be a simpler
derivation of \eqref{eq:genus 0}.
\end{remark}

\subsection{Connection with random matrices}
If $A$ is a square matrix, define (in an abuse of notation) the
power-sum symmetric function $p_k$ on $A$ by
$p_k(A):=\Tr(A^k)$. Hanlon--Stanley--Stembridge showed \cite[\S
2]{HSS} that for $x\leq y$ positive integers, the left side of \eqref{eq:Sn-twofactors-cycles} can be written as 
\begin{equation} \label{eq:expectationSn-cycles}
\sum_{r,s \geq 0} a_{r,s} \cdot x^r y^s = \mathbb{E}_U(p_n(AUBU^*))=\mathbb{E}_V(p_n(VV^*)),
\end{equation}
where the first expectation is over $y\times y$ random matrices $U$
with independent standard normal complex entries, $A=I_x\oplus {\bf 0}_{y-x}$
and $B=I_y$, and the second expectation is over $x\times y$ random matrices $V$
with independent standard normal complex entries.  Similarly, the left side of \eqref{q=1 refined generating function}
 can be written as 
\begin{equation} \label{eq:expectationSn-type}
\sum_{\lambda,\mu \vdash n} a_{\lambda,\mu} \cdot p_{\lambda}(a_1,\ldots,a_m)
p_{\mu}(b_1,\ldots,b_m) = \mathbb{E}_U(p_{n}(AUBU^*)),
\end{equation}
where the expectation is over random $m\times m$ matrices $U$ with
independent standard normal complex entries and $A,B$ are arbitrary
fixed $m\times m$ Hermitian complex matrices with eigenvalues
$a_1,\ldots,a_m$ and $b_1,\ldots,b_m$, respectively.  In \cite{GouldenJacksonRM}, Goulden and Jackson gave a combinatorial proof of these equations in terms of
factorizations and coloring of cycles. Is there an analogue of
\eqref{eq:expectationSn-cycles} for $\GL_n(\F_q)$ expressing the left side
of \eqref{eq: two factor theorem} as an expectation over random matrices?

\subsection{Other asymptotic questions}

As usual, define $F(x, y) = \sum_{r, s\geq 0} a_{r,s}(q) x^r y^s$.  In light of Remark~\ref{rmk: total number of factorizations}, we may view 
$F(x, x)/|G| = \frac{1}{|G|} \sum_{r, s\geq 0} a_{r,s}(q) x^{r + s}$  as encoding the probability distribution of the genus of a factorization chosen uniformly at random.  In particular, we have that
\[
\left. \frac{d}{dx} \left(\frac{F(x, x)}{|G|}\right) \right|_{x = 1} = \frac{1}{|G|} \sum_{r, s \geq 0} (r + s) \cdot a_{r, s}(q)
\]
is exactly equal to $n$ minus the expected genus of a random factorization.  Since $\left. (x; q^{-1})_t \right|_{x = 1} = 0$ for $t > 0$, we have by Theorem~\ref{thm: two factors} that
\begin{align}
\notag
\left. \frac{d}{dx}\left( \frac{F(x, x)}{|G|} \right)\right|_{x = 1} & = 2 \sum_{t = 1}^n b_{t, 0}(q) \cdot \left. \frac{d}{dx}\frac{(x; q^{-1})_t}{(q; q)_t}\right|_{x = 1} \\
\notag
& = 2 \sum_{t = 1}^n - \frac{(q^{-1}; q^{-1})_{t - 1}}{(q; q)_t} \\
\label{eq: expectation sum}
& = 2 \sum_{t = 1}^n \frac{(-1)^t}{q^{\binom{t}{2}} (1 - q^t)}.
\end{align}
Thus, the expected genus of a random factorization of a regular elliptic element in $\GL_n(\Fq)$ into two factors is exactly
\[
n -  2 \sum_{t = 1}^n \frac{(-1)^t}{q^{\binom{t}{2}} (1 - q^t)}.
\]
(Similar calculations could be made for the case of more factors.)  

Since the sum \eqref{eq: expectation sum} converges as $n \to \infty$,  
the vast majority of factorizations of a regular elliptic element $c \in \GL_n(\Fq)$ into two factors must have large genus.  Unfortunately, the techniques used to prove Theorem~\ref{thm:growth_rate} are not sufficient to compute asymptotics for the number of genus-$g$ factorizations if $g$ grows with $n$.  This leads to several natural questions.
\begin{question}
What is the asymptotic growth rate of the number of genus-$g$ factorizations of a regular elliptic element in $\GL_n(\Fq)$ into two factors if $g$ grows with $n$? For example, if $g = \alpha n$ for $\alpha \in (0, 1)$?
\end{question}
\begin{question}
Can one compute the limiting distribution of the genus of a random factorization of a regular elliptic element $c \in \GL_n(\Fq)$ into two factors when $n$ is large?  That is, choose $u$ uniformly at random in $\GL_n(\Fq)$ and let $v = u^{-1}c$; what is the distribution of the genus of the factorization $c = u \cdot v$?  As a first step, can one compute any higher moments of this distribution?
\end{question}

\subsection{Connection with supercharacters}

Proposition~\ref{character values prop} shows that certain characters have very simple values on the sum of a large number of elements.  This behavior is one characteristic of supercharacter theories (see, e.g., \cite{DiaconisIsaacs, DiaconisThiem}).
\begin{question}
Does the grouping of elements by fixed space dimension, as in Proposition~\ref{character values prop}, correspond to a (very coarse) supercharacter theory for $\GL_n(\F_q)$?
\end{question}

\appendix
\section{Calculation of generating functions for characters}
\label{calculation of f}

In this section we derive formulas for the character generating functions
\[
f_V(x) = \sum_{r = 0}^n \normchi^V(z_r) \cdot x^r
\]
required for the proof of our main theorems.

\begin{recap f}
If $V = (U, \lambda)$ for $U \neq \1$ we have
\begin{equation*}
\tag{\ref{easy f}}
f_{U, \lambda}(x) = |G| \cdot \frac{(x; q^{-1})_n}{(q; q)_n},
\end{equation*}
while if $V = (\1, \hook{d}{n})$ we have
\begin{equation*}
\tag{\ref{expressed in new basis}}
f_{\1, \hook{d}{n}}(x) = |G| \cdot  \left(\frac{(x; q^{-1})_n}{(q; q)_n} + q^{-d} \cdot \sum_{m = d}^{n - 1} \frac{[m]!_q \cdot [n - d - 1]!_q}{[m - d]!_q \cdot [n - 1]!_q} \cdot \frac{(x; q^{-1})_m}{(q; q)_m}\right).
\end{equation*}
\end{recap f}

\begin{proof}
To prove \eqref{easy f}, we substitute the value of $\normchi^{U, \lambda}(z_r)$ from Proposition~\ref{character values prop}(i) into the definition of $f_V$ and apply the $q$-binomial theorem \eqref{q-binomial 1} directly to get
\[
f_{U, \lambda}(x) = \sum_{r = 0}^n (-1)^{n - r} q^{\binom{n - r}{2}} \qbin{n}{r}{q} x^{r} 
% = \prod_{i = 0}^{n - 1} (x - q^i) 
% =  (-1)^n q^{\binom{n}{2}}(q; q)_n \cdot \frac{(x; q^{-1})_n}{(q; q)_n}
= |G| \cdot \frac{(x; q^{-1})_n}{(q; q)_n},
\]
as desired.

To prove \eqref{expressed in new basis}, we substitute from Proposition~\ref{character values prop}(ii) to get the monstrous equation
\begin{multline*}
f_{\1, \hook{d}{n}}(x) = \sum_{r = 0}^n x^{r} \cdot \Big((-1)^{n - r} q^{\binom{n - r}{2}} \Big(\qbin{n}{r}{q} + \\
\frac{(1-q)[n]_q}{[r]!_q} \cdot \sum_{j = 1}^{n - \max(r, d)} q^{jr - d} \cdot \frac{[n - j]!_q }{[n - r - j]!_q}\cdot(q^{n-d-j+1};q)_{j-1}\Big)\Big).
\end{multline*}
By \eqref{q-binomial 2}, the coefficient of $\frac{(x; q^{-1})_m}{(q; q)_m}$ in the right side after changing bases is
\begin{multline}
\label{change of basis coefficient}
c_m := \sum_{r = m}^{n} (-1)^m q^{\binom{m}{2}}(q^{r}; q^{-1})_m \cdot (-1)^{n - r} q^{\binom{n - r}{2}} \Big(\qbin{n}{r}{q} + \\
\frac{1-q^n}{[r]!_q} \cdot \sum_{j = 1}^{n - \max(r, d)} q^{jr - d} \cdot \frac{[n - j]!_q }{[n - r - j]!_q}\cdot(q^{n-d-j+1};q)_{j-1}\Big).
\end{multline}
We manipulate this expression, using the following special case of the $q$-binomial theorem \eqref{q-binomial 1}:
\[
\sum_{i = 0}^k (-1)^i q^{\binom{i}{2}} \qbin{k}{i}{q} = \begin{cases} 1 & k = 0, \\ 0 & \textrm{otherwise}.\end{cases}
\]
This yields
\begin{align*}
c_m & =
(-1)^m q^{\binom{m}{2}} \Big( 
\sum_{r = m}^{n} (-1)^{n - r} q^{\binom{n - r}{2}} \qbin{n}{r}{q}  (q^{r}; q^{-1})_m
+ {} \\
& \qquad  
(1-q^n) \cdot \sum_{r = m}^{n - 1}\frac{(-1)^{n - r} q^{\binom{n - r}{2}}(q^{r}; q^{-1})_m}{[r]!_q}  \sum_{j = 1}^{n - \max(r, d)}  q^{jr - d} \cdot \frac{[n - j]!_q }{[n - r - j]!_q}\cdot(q^{n-d-j+1};q)_{j-1} \Big)\\
& =
(-1)^m q^{\binom{m}{2}} \Big( 
(1 - q)^m \sum_{r = m}^{n} (-1)^{n - r} q^{\binom{n - r}{2}}\frac{[n]!_q}{[n - r]!_q [r - m]!_q} 
+ {} \\
& \qquad 
(1-q^n) (1 - q)^m\cdot \sum_{r = m}^{n - 1}\frac{(-1)^{n - r} q^{\binom{n - r}{2}}}{[r - m]!_q}  \sum_{j = 1}^{n - \max(r, d)}  q^{jr - d} \cdot \frac{[n - j]!_q }{[n - r - j]!_q}\cdot(q^{n-d-j+1};q)_{j-1} \Big)
\end{align*}
\begin{align*}
\phantom{c_m} & =
(q - 1)^m q^{\binom{m}{2}} \Big( 
\frac{[n]!_q}{[n - m]!_q}\sum_{r = m}^{n} (-1)^{n - r} q^{\binom{n - r}{2}} \qbin{n - m}{n - r}{q}
+ {} \\
& \qquad 
(1-q^n)\sum_{j = 1}^{n - \max(m, d)} q^{jn - d} [n - j]!_q (q^{n - d- j + 1}; q)_{j - 1}
\sum_{r = m}^{n - j}\frac{ (-1)^{n - r} q^{\binom{n - r}{2} - (n - r)j}}{[n - r - j]!_q[r - m]!_q} \Big)\\
& = 
(q-1)^m q^{\binom{m}{2}} \Big( 
\frac{[n]!_q}{[n - m]!_q}\delta_{m, n}
+ {}\\
& \qquad 
(1-q^n)
\hspace{-.1in}\sum_{j = 1}^{n - \max(m, d)}\hspace{-.1in}
 q^{jn - d - \binom{j + 1}{2}} \frac{[n - j]!_q (q^{n - d- j + 1}; q)_{j - 1}}{[n - m - j]!_q}
\sum_{r = m}^{n - j} (-1)^{n - r} q^{\binom{n - r - j}{2}}\qbin{n - m - j}{n - r - j}{q} \Big)\\
& = 
(q-1)^m q^{\binom{m}{2}} \Big( 
[n]!_q \delta_{m, n}
+
(1-q^n)
\hspace{-.1in}\sum_{j = 1}^{n - \max(m, d)}\hspace{-.1in}
(-1)^j q^{jn - d - \binom{j + 1}{2}} \frac{[n - j]!_q (q^{n - d- j + 1}; q)_{j - 1}}{[n - m - j]!_q} \delta_{n - j, m}\Big).
\end{align*}
We now consider cases: if $d > m$ then this evaluates to $0$ (we have $d < n$ so $\delta_{m, n} = 0$, while every term in the second sum satisfies $j \leq n - d < n - m$ and so $\delta_{n - j, m} = 0$).  If $m = n$ then it evaluates to $(q - 1)^n q^{\binom{n}{2}} [n]!_q = |G|$ (the second sum is empty).  Finally, if $d \leq m < n$ it simplifies to 
\begin{multline*}
(q-1)^m q^{\binom{m}{2}} 
(1-q^n)
(-1)^{n - m} q^{(n - m)n - d - \binom{n - m + 1}{2}} \frac{[m]!_q (q^{m - d + 1}; q)_{n - m - 1}}{[0]!_q} 
\\
=
(q - 1)^n q^{\binom{n}{2} - d} \frac{[n]_q \cdot [m]!_q \cdot [n - d - 1]!_q}{[m - d]!_q}.
\end{multline*}
Factoring out $|G|$ gives the desired result.
\end{proof}

\bibliography{AbsGL}{}
\bibliographystyle{alpha}

\end{document}